\newcommand{\bn}{\begin{enumerate}}
\newcommand{\en}{\end{enumerate}}
\newcommand{\bi}{\begin{itemize}}
\newcommand{\ei}{\end{itemize}}
\newcommand{\be}{\begin{eqnarray}}
\newcommand{\ee}{\end{eqnarray}}
\newcommand{\by}{\begin{eqnarray*}}
\newcommand{\ey}{\end{eqnarray*}}
\newcommand{\beq}{\begin{equation}}
\newcommand{\eeq}{\end{equation}}
\newcommand{\defeq}{\vcentcolon=}
\newtheorem{theorem}{Theorem}[section]
\newtheorem{lemma}[theorem]{Lemma}
\newtheorem{corollary}[theorem]{Corollary}
\newcommand{\eE}{\mathbb{E}}
\newcommand{\KL}{\text{KL}}
\theoremstyle{plain}
\theoremstyle{plain}
\theoremstyle{remark}
\theoremstyle{plain}
 \title{\LARGE \textbf{Gibbs sampler and coordinate ascent variational inference: A set-theoretical review}}
\author{Se Yoon Lee
\\
\\
Department of Statistics, Texas A\&M University,  College Station, Texas, USA
\\
\\
\texttt{seyoonlee@stat.tamu.edu}
}
\date{}
\begin{document}
\maketitle
\begin{abstract}
\noindent \\
One of the fundamental problems in Bayesian statistics is the approximation of the posterior distribution. Gibbs sampler and coordinate ascent variational inference are renownedly utilized approximation techniques that rely on stochastic and deterministic approximations. In this paper, we define fundamental sets of densities frequently used in Bayesian inference. We shall be concerned with the clarification of the two schemes from the set-theoretical point of view. This new way provides an alternative mechanism for analyzing the two schemes endowed with pedagogical insights. 
\\
\\
\noindent \emph{Keywords:} Gibbs sampler; Coordinate ascent variational inference; Duality formula.
\end{abstract}

\section{Introduction}\label{sec:Introduction}
\noindent A statistical model contains a sample space of observations $\textbf{y}$ endowed with an appropriate $\sigma$-field of sets over which is given a family of probability measures. For almost all problems, it is sufficient to suppose that these probability measures can be described through their density functions, $p(\textbf{y}|\bm{\theta})$, indexed by a parameter $\bm{\theta}$ belonging to the parameter space $\bm{\Theta}$. In many problems, one of the essential goals is to make an inference about the parameter $\bm{\theta}$, and this article particularly concerns Bayesian inference. 

Bayesian approaches start with expressing the uncertainty associated with the parameter $\bm{\theta}$ through a density $\pi(\bm{\theta})$ supported on the parameter space $\bm{\Theta}$, called a prior. A collection $\{p(\textbf{y}|\bm{\theta}),\pi(\bm{\theta})\}$ is referred to as a Bayesian model. Given finite evidence $m(\textbf{y}) = \int p(\textbf{y}|\bm{\theta}) \cdot \pi(\bm{\theta}) d\bm{\theta}$ for all $\textbf{y}$, the Bayes' theorem formalizes an inversion process to learn the parameter $\bm{\theta}$ given the observations $\textbf{y}$ through its posterior distribution:
\begin{align}
\label{eq:posterior_distribution}
\pi(\bm{\theta}|\textbf{y}) = 
\frac{p(\textbf{y}|\bm{\theta}) \cdot \pi(\bm{\theta})}{m(\textbf{y})}.
\end{align}
A central task in the application of Bayesian models is the evaluation of this joint density $\pi(\bm{\theta}|\textbf{y})$ (\ref{eq:posterior_distribution}) or indeed to compute expectation with respect to this density. 

However, for many complex Bayesian models, the posterior distribution $\pi(\bm{\theta}|\textbf{y})$ is intractable. In such situations, we need to resort to approximation techniques, and these fall broadly into two classes, according to whether they rely on stochastic \citep{casella1992explaining,neal2011mcmc,murray2010elliptical,neal2003slice} or deterministic \citep{ranganath2014black,wang2013variational,minka2013expectation,blei2017variational} approximations. See \citep{andrieu2003introduction,zhang2018advances} for review papers for these techniques.

Gibbs sampler \citep{casella1992explaining} and coordinate ascent variational inference (CAVI) algorithm \citep{blei2017variational} are extremely popular techniques to approximate the target density $\pi(\bm{\theta}|\textbf{y})$ (\ref{eq:posterior_distribution}). They are often jointed with more sophisticated samplers or optimizers, and share some common structure from an implementational point of view. For instances, Gibbs sampler is combined with the Metropolis-Hastings algorithms \citep{chib1995understanding,beichl2000metropolis,dwivedi2018log} endowed with a nice proposal density which is typically easy to simulate from. The CAVI algorithm is combined with the stochastic gradient descent method \citep{ruder2016overview} endowed with a reasonable assumption of mean-field family whose members are computationally tractable.

Essentially, the utilities of the two schemes are ascribed to their exploitations of the conditional independences \citep{dawid1979conditional} induced by a certain hierarchical structure formulated through the parameter $\bm{\theta}$ and observations $\textbf{y}$. That way, we can decompose the original problem of approximation of the joint density $\pi(\bm{\theta}|\textbf{y})$, possibly supported on a high-dimensional parameter space $\bm{\Theta}$, into a collection of small problems with low dimensionalities. A single cycle of resulting algorithms comprises multiple steps where at each step only a small fraction of the $\bm{\theta}$ is updated, while remaining components are fixed with the most recently updated information. 

This article aims to understand the two schemes set-theoretically to clarify some common structure between the two schemes and provide relevant pedagogical insights. Here, we say ``set-theoretical understanding" in the sense that we will treat fundamental densities used in the two schemes as elements of some sets of densities. Set-theoretical statements are helpful in a clear understanding of the algorithms as they explain how ingredients of the two schemes are functionally related each other. A duality formula for variational inference \citep{massart2007concentration} is the essential theorem for the expositions.

\section{A duality formula for variational inference}\label{sec:Duality formula for variational inference}
To state a duality formula for variational inference,  we first introduce some ingredients. Let $\bm{\Theta}$ be a set endowed with an appropriate $\sigma$-field $\mathcal{F}$, and two probability measures $P$ and $Q$, which formulates two probability spaces, $(\bm{\Theta}, \mathcal{F},P)$ and $(\bm{\Theta}, \mathcal{F},Q)$. We use notation $Q \ll P$ to indicate that $Q$ is absolutely continuous with respect to $P$ (i.e., $Q(A) = 0$ holds for any measurable set $A \in \mathcal{F}$ with $P(A) = 0$). Let $\eE_{P}[\cdot]$ denote integration with respect to the probability measure $P$. Given any real-valued random variable $g$ defined on the probability space $(\bm{\Theta}, \mathcal{F},P)$, notation $g \in L_{1}(P)$ implies that the random variable $g$ is integrable with respect to measure $P$, that is, $\eE_{P}[|g|]= \int |g| dP < \infty$. The notation $\KL(Q\|P)$ represents the Kullback-Leibler divergence from $P$ to $Q$, $\KL(Q\|P)=\int \log\ (dQ/dP) dQ$ 
 \citep{kullback1997information}.
 
\begin{theorem}[\textbf{Duality formula}]\label{theorem:Duality formula for variational inference}
Consider two probability spaces $(\bm{\Theta}, \mathcal{F},P)$ and $(\bm{\Theta}, \mathcal{F},Q)$ with $Q \ll P$. Assume that there is a common dominating probability measure $\lambda$ such that $P \ll \lambda$ and $Q\ll \lambda$. Let $h$ denote any real-valued random variable on $(\bm{\Theta}, \mathcal{F},P)$ that satisfies $\exp h \in L_{1}(P)$. Then the following equality holds
\begin{align*}
\log\eE_{P}[\exp h]
&= \text{sup}_{Q \ll P} \{
\eE_{Q}[h]
-
\KL(Q\|P) \}.
\end{align*}
Further, the supremum on the right-hand side is attained if and only if it holds
\begin{align*}
\frac{q(\theta)}{p(\theta)} = \frac{\exp h(\theta)}{\eE_{P}[\exp h]},
\end{align*}
almost surely with respect to probability measure $Q$, where $p(\theta) = dP/d\lambda$ and $q(\theta) = dQ/d\lambda$ denote the Radon-Nikodym derivatives of the probability measures $P$ and $Q$ with respect to $\lambda$, respectively.
\end{theorem}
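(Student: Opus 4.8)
The plan is to reduce everything to the non-negativity of the Kullback-Leibler divergence by introducing an auxiliary probability measure, the Gibbs measure associated with $h$. First I would define $P_h$ through its Radon-Nikodym derivative with respect to $P$,
\[
\frac{dP_h}{dP} = \frac{\exp h}{\eE_{P}[\exp h]}.
\]
Since $\exp h \in L_{1}(P)$ and $\exp h > 0$ everywhere, the right-hand side is a strictly positive density that integrates to one under $P$, so $P_h$ is a well-defined probability measure which is moreover mutually absolutely continuous with $P$. Consequently $Q \ll P$ guarantees $Q \ll P_h$, and the chain rule for Radon-Nikodym derivatives becomes available.

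The key step is an exact decomposition of $\KL(Q\|P)$. Writing $dQ/dP = (dQ/dP_h)\,(dP_h/dP)$ and taking logarithms gives, $Q$-almost surely,
\[
\log \frac{dQ}{dP} = \log \frac{dQ}{dP_h} + h - \log \eE_{P}[\exp h].
\]
Integrating both sides against $Q$ yields the identity
\[
\KL(Q\|P) = \KL(Q\|P_h) + \eE_{Q}[h] - \log \eE_{P}[\exp h],
\]
which I would rearrange into
\[
\eE_{Q}[h] - \KL(Q\|P) = \log \eE_{P}[\exp h] - \KL(Q\|P_h).
\]
This is the heart of the argument: the left-hand side, namely the quantity to be maximized over $Q$, equals a constant independent of $Q$ minus a non-negative penalty term.

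From here the conclusion is immediate. Since $\KL(Q\|P_h) \geq 0$ by Gibbs' inequality, we obtain $\eE_{Q}[h] - \KL(Q\|P) \leq \log \eE_{P}[\exp h]$ for every $Q \ll P$, so the supremum is bounded above by $\log \eE_{P}[\exp h]$. Equality holds precisely when $\KL(Q\|P_h) = 0$, that is, when $Q = P_h$, which in terms of densities is exactly the stated condition $q/p = \exp h / \eE_{P}[\exp h]$ holding $Q$-almost surely. Taking $Q = P_h$ then shows the bound is achieved, establishing both the duality identity and the characterization of the maximizer in one stroke.

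I expect the main technical care to lie not in the algebra but in the measure-theoretic bookkeeping. One must verify the absolute-continuity chain $Q \ll P_h \ll P$, which rests on the strict positivity $\exp h > 0$, so that both the chain rule and the logarithm of the density are legitimate; and one must confirm that the hypothesis $\exp h \in L_{1}(P)$, together with $\KL(Q\|P) < \infty$, renders $\eE_{Q}[h]$ well-defined, so that the rearranged identity is not an indeterminate $\infty - \infty$. When $\KL(Q\|P) = +\infty$ the inequality is trivial and the supremum is unaffected, so this edge case can be dispatched separately.
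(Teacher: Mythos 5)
Your proof is correct, but it takes a genuinely different route from the paper's. The paper proceeds by direct computation with $\lambda$-densities: it collapses $\eE_Q[h]-\KL(Q\|P)$ into the single integral $\int \log\bigl(e^{h(\theta)}p(\theta)/q(\theta)\bigr)\,q(\theta)\,d\lambda(\theta)$ and applies Jensen's inequality to the concave logarithm, then extracts the equality case from strict concavity (the ratio $e^{h}p/q$ must be $Q$-a.s.\ constant). You instead introduce the tilted measure $P_h$ with $dP_h/dP = e^{h}/\eE_P[e^{h}]$ and prove the exact identity $\eE_Q[h]-\KL(Q\|P) = \log\eE_P[e^{h}]-\KL(Q\|P_h)$, so that both the variational formula and the characterization of the maximizer follow from the single fact that $\KL(Q\|P_h)\geq 0$ with equality iff $Q=P_h$ (the two arguments are cousins, since non-negativity of KL is itself a Jensen consequence, but the organization differs). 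Your decomposition buys two things: the inequality and its equality case come out simultaneously, with no separate strict-concavity discussion; and the dominating measure $\lambda$ is never needed in the argument itself, only at the very end to translate $Q=P_h$ into the stated density condition $q/p = e^{h}/\eE_P[e^{h}]$, whereas the paper works with $p$ and $q$ throughout. You are also more careful than the paper on the integrability edge cases: the paper asserts that $Q\ll P$ makes $\KL(Q\|P)$ ``well-defined and finite,'' which is false in general (the divergence can be $+\infty$), while you correctly isolate the case $\KL(Q\|P)=+\infty$ as trivial and flag that $\eE_Q[h]$ must be shown well-defined before the rearranged identity can be used.
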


\begin{proof}
We use a measurement theory \citep{royden1988real} for a direct proof. Due to the dominating assumptions $P \ll \lambda$ and $Q\ll \lambda$ and the Radon-Nikodym theorem (Theorem 32.1 of  \citep{billingsley2008probability}), there exist Radon-Nikodym derivatives (also called generalized probability densities \citep{kullback1997information}) 
$p(\theta) = dP/d\lambda$ and $q(\theta) = dQ/d\lambda$ unique up to sets of measure (probability) zero in $\lambda$ corresponding to measures $P$ and $Q$, respectively. On the other hand, due to the dominating assumption $Q \ll P$, there exists Radon-Nikodym derivative $dQ/dP$, hence, Kullback-Leibler divergence $\KL(Q\|P) = \int \log\ (dQ/dP) dQ$ is well-defined and finite. By using conventional measure-theoretic notation (for example, see page 4 of \citep{kullback1997information}), we shall write $dP(\theta) = p(\theta) d\lambda(\theta)$ and $dQ(\theta) = q(\theta) d\lambda(\theta)$, and $\int g dP =\int g(\theta) dP(\theta)$ for any $g \in L_{1}(P)$ \citep{resnick2003probability}.

Use the measure theoretic ingredients as follow:
\begin{align}
\nonumber
\eE_{Q}[h]
-
\KL(Q\|P)
&=
\int h dQ 
- 
\int \log\ \bigg(\frac{dQ}{dP} \bigg) dQ
\\
\nonumber
&=
\int h(\theta) dQ (\theta)
- 
\int \log\ \bigg(\frac{dQ(\theta)}{dP(\theta)} \bigg) dQ(\theta)\\
\nonumber
&=
\int h(\theta) q(\theta) d\lambda (\theta)
- 
\int \log\ \bigg(\frac{q(\theta)}{p(\theta)} \bigg) q(\theta) d\lambda (\theta)\\
\nonumber
&
=
\int \log\ \bigg(\frac{e^{h(\theta)}  p(\theta)}{q(\theta)} \bigg) q(\theta) d\lambda (\theta)\\
\label{eq:duality_proof}
&\leq
\log\bigg(
\int  \bigg(\frac{e^{h(\theta)}  p(\theta)}{q(\theta)} \bigg) q(\theta) d\lambda (\theta)
\bigg)
\\
\nonumber
&
=
\log
\bigg(
\int e^{h(\theta)}  p(\theta) d\lambda (\theta)
\bigg)
\\
\nonumber
&=
\log
\bigg(
\int e^{h(\theta)}  dP(\theta)
\bigg)
\\
\nonumber
&
=
\log
\bigg(
\int e^{h}  dP
\bigg)\\
\nonumber
&=
\log \eE_{P}[\exp h].
\end{align}
Note that the Jensen's inequality is used to derive the inequality in (\ref{eq:duality_proof}). Because the logarithm function is strictly concave, this inequality becomes the equality if and only if the function $e^{h(\theta)} p(\theta)/q(\theta)$ is constant on $\bm{\Theta}$ almost surely with respect to measure $Q$ (page 52 of \citep{keener2010theoretical}). Let $e^{h(\theta)} p(\theta)/q(\theta) = k$, where $k$ is constant almost surely with respect to $Q$. Then we have $e^{h(\theta)} p(\theta)= k q(\theta)$. Take the integral $\int \cdot d\lambda(\theta)$ on both sides on the final equation to complete the proof: $\mathbb{E}_{P}[\exp h]= \int e^{h} dP = \int e^{h(\theta)} dP(\theta)=
  \int e^{h(\theta)} p(\theta)d\lambda(\theta) = \int k q(\theta)d\lambda(\theta) =\int k dQ(\theta)=  \int  k dQ = k$.
\end{proof}

In practice, a common dominating measure $\lambda$ for $P$ and $Q$ is usually either Lebesgue or counting measure. This article mainly explores the former case where the duality formula becomes
\begin{align}
\label{eq:duality_formula_continuous_version}
\log\eE_{p(\theta)}[\exp h(\theta)]
&= \text{sup}_{q \ll p} \{
\eE_{q(\theta)}[h(\theta)]
-
\KL(q\|p) \}.
\end{align}
In equality (\ref{eq:duality_formula_continuous_version}), the $p(\theta)=dP/d\lambda$ and $q(\theta)=dQ/d\lambda$ are probability density functions (pdf) corresponding to the probability measures $P$ and $Q$, respectively, and $h(\theta)$ is any measurable function such that the expectation $\eE_{p(\theta)}[\exp h(\theta)]$ is finite. Expectations in the equilibrium (\ref{eq:duality_formula_continuous_version}) are taken with respect to densities on the subscripts. For instance, the expectation $\eE_{p(\theta)}[\exp h(\theta)]$ represents the integral $\int \exp h(\theta) p(\theta) d\theta$, and the Kullback-Leibler divergence is expressed with the pdf version, that is, $\KL(q(\theta)\|p(\theta)) = \int q(\theta) \log (q(\theta)/p(\theta)) d\theta$. We shall use the notation $q \ll p$ to indicate that their corresponding probability measures satisfy the dominating condition $Q \ll P$.

\section{Fundamental sets in Bayesian inference}\label{sec:Essential sets in Bayesian reasoning}
Consider a Bayesian model $\{p(\textbf{y}|\bm{\theta}),\pi(\bm{\theta})\}$ where $p(\textbf{y}|\bm{\theta})$ is a data generating process and $\pi(\bm{\theta})$ is a prior density as explained in Section \ref{sec:Introduction}. Now, we additionally assume that the entire parameter space $\bm{\Theta}$ is decomposed as
\begin{align}
\label{eq:decomposition_of_entire_ps}
\bm{\Theta} = \Pi_{i=1}^K \Theta_{i}=  \Theta_{1} \times  \cdots   \times \Theta_{i} \times \cdots  \times \Theta_{K},
\end{align}
for some integer $K>1$. Each component parameter space $\Theta_{i}$ ($i=1,\cdots,K$) can be a set of scalar components, subvectors, or matrices \citep{smith1993bayesian}. The notation $A \times B$ denotes the Caresian product between two sets $A$ and $B$. Under the decomposition (\ref{eq:decomposition_of_entire_ps}), elements of the set $\bm{\Theta}$ can be expressed as $\bm{\theta}=(\theta_{1}, \cdots, \theta_{i} , \cdots, \theta_{K}) \in \bm{\Theta}$ where $\theta_{i}\in \Theta_{i}$ ($i=1,\cdots,K$).

For each $i$, we define a set that complements the $i$-th component parameter space $\Theta_{i}$:
\begin{align}
\label{eq:complementary_ps}
\Theta_{-i} \defeq \Pi_{j =1, j \neq i }^K \Theta_{j}
 =  \Theta_{1} \times  \cdots   \times \Theta_{i-1} \times 
\Theta_{i+1}  \times
 \cdots  \times \Theta_{K}.
\end{align}
The set $\Theta_{-i}$ (\ref{eq:complementary_ps}) is called the $i$-th complementary parameter space. Elements of the $\Theta_{-i}$ are of the form $\theta_{-i}=(\theta_{1}, \cdots, \theta_{i-1},  \theta_{i+1}, \cdots, \theta_{K}) \in\Theta_{-i}$. 

\begin{figure}[ht]
\centering
\includegraphics[width=1\textwidth]{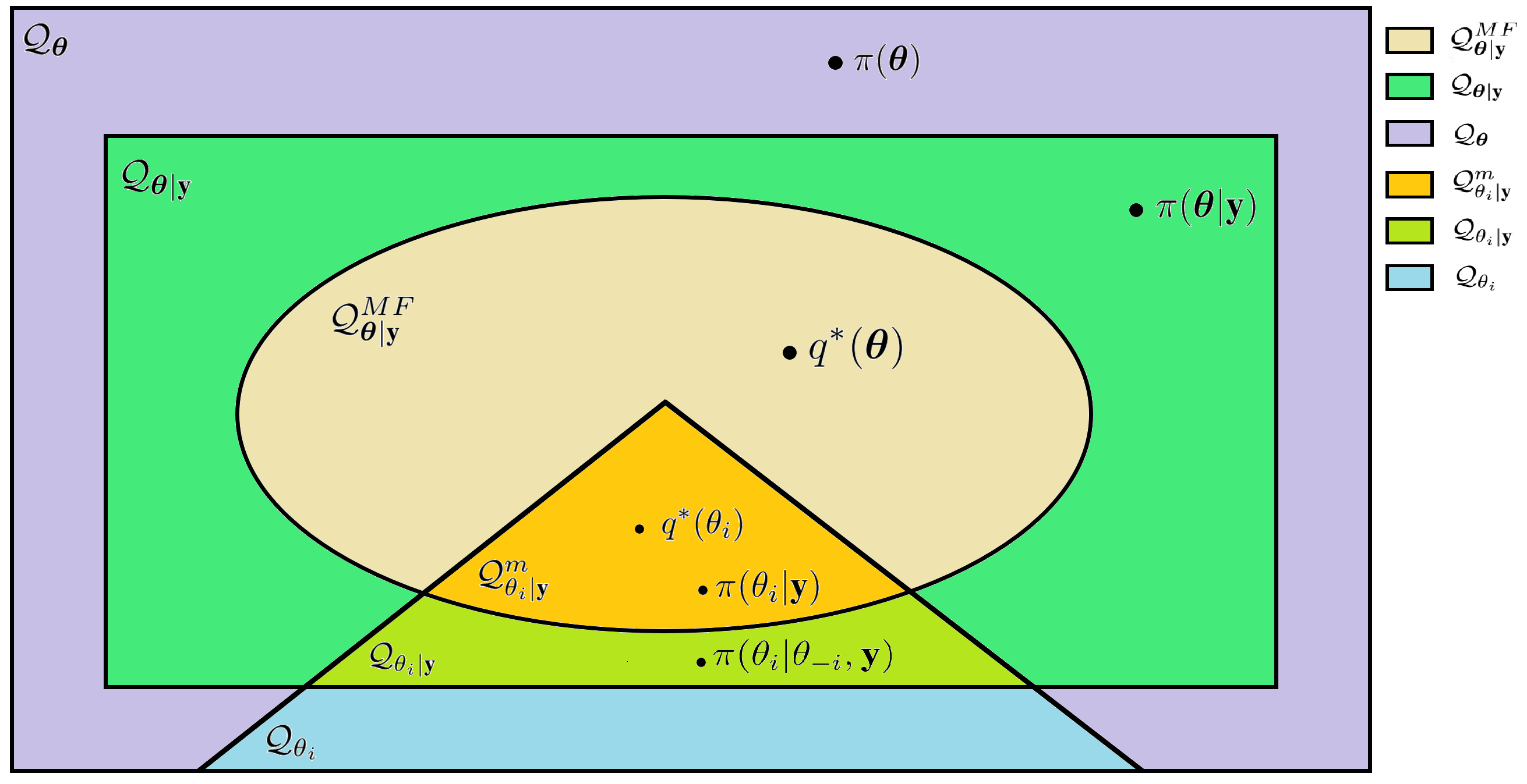}
\caption{Venn diagram that superimposed two set-inclusion relationships: (1) $\mathcal{Q}_{\bm{\theta}|\textbf{y}}^{MF} \subset \mathcal{Q}_{\bm{\theta}|\textbf{y}} \subset \mathcal{Q}_{\bm{\theta}}$, and (2) $\mathcal{Q}_{\theta_{i}|\textbf{y}}^{m} \subset \mathcal{Q}_{\theta_{i}|\textbf{y}} \subset \mathcal{Q}_{\theta_{i}}$ for each component index $i$ ($i=1,\cdots, K$). Symbol $\bullet$ indicates an element of the sets. For each $i$, $\pi(\theta_{i}|\theta_{-i},\textbf{y})$ (\ref{eq:full_conditional_for_theta_i}), $\pi(\theta_{i}|\textbf{y})$, and $q^{*}(\theta_{i})$ (\ref{eq:variational_factor_theta_i}) are the full conditional, marginal posterior, and variational factor, respectively. The $\pi(\bm{\theta})$, $\pi(\bm{\theta}|\textbf{y})$ (\ref{eq:posterior_distribution}), and $q^{*}(\bm{\theta})$ are prior density, posterior density, and VB posterior, respectively.}
\label{fig:set_inclusion}
\end{figure}

It is important to emphasize that how to impose a decomposition on the set $\bm{\Theta}$ (i.e., to determine the integer $K$ or the dimension of the component parameter spaces $\Theta_{i}$ in (\ref{eq:decomposition_of_entire_ps})) is at the discretion of a model builder. Although it is possible to fully factorize the $\bm{\Theta}$ so that every $\Theta_{i}$ consists of scalar components, sampling or optimization algorithm based on such a decomposition may have limited accuracy, especially when the latent variables are highly dependent. One important consideration in choosing a decomposition (\ref{eq:decomposition_of_entire_ps}) is the correlation structure embedded in the target density $\pi(\bm{\theta}|\textbf{y})$. For instance, when a Bayesian model $\{p(\textbf{y}|\bm{\theta}),\pi(\bm{\theta})\}$ formulates a certain hierarchical structure, one can impose a decomposition based on the conditional independence induced by the hierarchy among the latent variables $\theta_{i}$'s and observations $\textbf{y}$, thereby exploiting the notion of Markov blankets \citep{pellet2008using}.

Now, we define fundamental sets of densities, itemized with \textbf{(i)} -- \textbf{(vi)}. They play crucial roles in Bayesian inference about the parameter $\bm{\theta}$ provided that Gibbs sampler or CAVI algorithm is carried out to approximate the target density $\pi(\bm{\theta}|\textbf{y})$ (\ref{eq:posterior_distribution}):
\begin{itemize}
\item[\textbf{(i)}]Set $\mathcal{Q}_{\bm{\theta}}$ is the collection of all the densities supported on the parameter space $\bm{\Theta}$. Thus, the $\mathcal{Q}_{\bm{\theta}}$ is the largest set of densities that we can consider in Bayesian inference. Set $\mathcal{Q}_{\bm{\theta}|\textbf{y}}$ is the collection of all the densities conditioned on the observations $\textbf{y}$. Here, the term `conditioned on the observations $\textbf{y}$' can be also replaced by term `having observed $\textbf{y}$', or more concisely by `\emph{a posteriori}'. By definitions, it holds a subset inclusion, $\mathcal{Q}_{\bm{\theta}|\textbf{y}} \subset \mathcal{Q}_{\bm{\theta}}$. Both prior and posterior densities, $\pi(\bm{\theta})$ and $\pi(\bm{\theta}|\textbf{y})$ (\ref{eq:posterior_distribution}), belong to the set $\mathcal{Q}_{\bm{\theta}}$. And in particular, the posterior density $\pi(\bm{\theta}|\textbf{y})$ belongs to the set $\mathcal{Q}_{\bm{\theta}|\textbf{y}}$. As the prior density $\pi(\bm{\theta})$ does not involve the observations $\textbf{y}$, it holds $\pi(\bm{\theta})\in \mathcal{Q}_{\bm{\theta}} - \mathcal{Q}_{\bm{\theta}|\textbf{y}} = \mathcal{Q}_{\bm{\theta}} \cap (\mathcal{Q}_{\bm{\theta}|\textbf{y}})^{c}$;
\item[\textbf{(ii)}] For each $i$ ($i=1,\cdots,K$), set $\mathcal{Q}_{\theta_{i}}$ is the collection of all the densities supported on the $i$-th component parameter space $\Theta_{i}$, and set $\mathcal{Q}_{\theta_{i}|\textbf{y}}$ denotes the collection for the only posterior densities supported on $\Theta_{i}$. This implies that a subset inclusion, $\mathcal{Q}_{\theta_{i}|\textbf{y}} \subset \mathcal{Q}_{\theta_{i}}$ holds for each $i$. For each $i$, the full conditional posterior density $\pi(\theta_{i}|\theta_{-i},\textbf{y}) = \pi(\theta_{i},\theta_{-i},\textbf{y})/\pi(\theta_{-i},\textbf{y}) = \pi(\bm{\theta},\textbf{y})/\pi(\theta_{-i},\textbf{y})$ and marginal posterior density $\pi(\theta_{i}|\textbf{y})$ are typical elements of the set $\mathcal{Q}_{\theta_{i}|\textbf{y}}$;
\item[\textbf{(iii)}] For each $i$ ($i=1,\cdots,K$), set $\mathcal{Q}_{\theta_{-i}}$ is the collection of all the densities supported on the $i$-th complementary parameter space $\Theta_{-i}$ (\ref{eq:complementary_ps}), and set $\mathcal{Q}_{\theta_{-i}|\textbf{y}}$ denotes the collection for the only posterior densities supported on $\Theta_{-i}$. By definition, it holds $\mathcal{Q}_{\theta_{-i}|\textbf{y}} \subset \mathcal{Q}_{\theta_{-i}}$ for each $i$. Two typical elements of the set $\mathcal{Q}_{\theta_{-i}|\textbf{y}}$ are $\pi(\theta_{-i}|\textbf{y})$ which satisfies $\pi(\bm{\theta}|\textbf{y}) = \pi(\theta_{i}|\theta_{-i},\textbf{y}) \pi(\theta_{-i}|\textbf{y})$, and $\pi(\theta_{-i}|\theta_{i}, \textbf{y})$ which satisfies $\pi(\bm{\theta}|\textbf{y}) = \pi(\theta_{-i}|\theta_{i},\textbf{y}) \pi(\theta_{i}|\textbf{y})$;
\item[\textbf{(iv)}] For each $i$ ($i=1,\cdots,K$), set $\mathcal{Q}_{\theta_{i}}^{m}$ is the collection of all the `marginal' densities supported on the $i$-th component parameter space $\Theta_{i}$, and set $\mathcal{Q}_{\theta_{i}|\textbf{y}}^{m}$ is the collection for the only `posterior marginal' densities supported on $\Theta_{i}$. Here, the `marginal' is superscripted with `$m$'. The meaning of marginal density of $\theta_{i}$ can be understood that no elements in the $i$-th complementary parameter space $\Theta_{-i}$ (\ref{eq:complementary_ps}) are involved in the density. Although the marginal posterior density $\pi(\theta_{i}|\textbf{y})\in \mathcal{Q}_{\theta_{i}|\textbf{y}}$ belongs to the set $\mathcal{Q}_{\theta_{i}|\textbf{y}}^{m}$, the full conditional density $\pi(\theta_{i}|\theta_{-i},\textbf{y}) \in \mathcal{Q}_{\theta_{i}|\textbf{y}}$ does not belong to the set $\mathcal{Q}_{\theta_{i}|\textbf{y}}^{m}$ unless $\theta_{i}$ and $\theta_{-i}$ are conditionally independent given $\textbf{y}$; 
\item[\textbf{(v)}] Cartesian product of $K$ sets of marginal densities $\{\mathcal{Q}_{\theta_{i}}^{m}\}_{i=1}^{K}$ (defined in the item \textbf{(iv)}) produces a set of densities supported on $\bm{\Theta}$
\begin{align}
\label{eq:mean-field variational family}
\mathcal{Q}_{\bm{\theta}}^{MF} & \defeq
\prod_{i=1}^{K} \mathcal{Q}_{\theta_{i}}^{m} =
\mathcal{Q}_{\theta_{1}}^{m}\times \cdots \times 
\mathcal{Q}_{\theta_{i}}^{m}
\times
\cdots
\mathcal{Q}_{\theta_{K}}^{m}=
\bigg\{q(\bm{\theta})
\,\bigg|\,q(\bm{\theta}) = \prod_{i=1}^{K} q(\theta_{i}),q(\theta_{i}) \in \mathcal{Q}_{\theta_{i}}^{m} 
\bigg\}
.
\end{align}
The set $\mathcal{Q}_{\bm{\theta}}^{MF}$ (\ref{eq:mean-field variational family}) is referred to as the \emph{mean-field variational family} \citep{jordan1999introduction}, whose root can be found in statistical physics literature \citep{chandler1987introduction,parisi1988statistical,baxter2016exactly}. The superscript `$MF$' represents the `mean-field'. 

Note that an element of the set $\mathcal{Q}_{\bm{\theta}}^{MF}$ (\ref{eq:mean-field variational family}) is expressed with a product-form distribution supported on the parameter space $\bm{\Theta}$ (\ref{eq:decomposition_of_entire_ps}). Due to the definition of the set $\mathcal{Q}_{\theta_{i}}^{m}$ ($i=1,\cdots,K$) (defined in \textbf{(iv)}), elements of the set $\mathcal{Q}_{\bm{\theta}}^{MF}$ (\ref{eq:mean-field variational family}) retain a flexibility, a nice feature of non-parametric densities, with the unique constraint on the flexibility is the (marginal) independence among $\theta_{i}$'s induced by the mean-field theory (\ref{eq:mean-field variational family}) \citep{ormerod2010explaining}. It is important to emphasize that this mean-field assumption (\ref{eq:mean-field variational family}) is \emph{not} a modeling assumption underlying the Bayesian model $\{p(\textbf{y}|\bm{\theta}),\pi(\bm{\theta})\}$: that is, we do not need this assumption to approximate the target density $\pi(\bm{\theta}|\textbf{y})$ (\ref{eq:posterior_distribution}). Rather, the purpose of imposing the mean-field assumption is to implement the CAVI algorihtm.

Likewisely, we define a set $\mathcal{Q}_{\bm{\theta}|\textbf{y}}^{MF}$ as the set obtained by Cartesian product of $K$ sets of posterior marginal densities 
$\{\mathcal{Q}_{\theta_{i}|\textbf{y}}^{m}\}_{i=1}^{K}$ as follows
\begin{align}
\label{eq:mean-field variational family_a_posteriori}
\mathcal{Q}_{\bm{\theta}|\textbf{y}}^{MF} \defeq \prod_{i=1}^{K}\mathcal{Q}_{\theta_{i}|\textbf{y}}^{m}=
\mathcal{Q}_{\theta_{1}|\textbf{y}}^{m}\times \cdots 
\times 
\mathcal{Q}_{\theta_{i}|\textbf{y}}^{m}\times 
\cdots 
\times \mathcal{Q}_{\theta_{K}|\textbf{y}}^{m};
\end{align}
\item[\textbf{(vi)}] For each $i$ ($i=1,\cdots, K$), Cartesian product of $K-1$ sets of marginal densities $\{\mathcal{Q}_{\theta_{j}}^{m}\}_{j=1,j\neq i}^{K}$ (defined in \textbf{(iv)}) defines a set of densities supported on the $i$-th complementary parameter space $\Theta_{-i}$ (\ref{eq:complementary_ps})
\begin{align}
\label{eq:the_i_th_complemental_variational_family}
&\mathcal{Q}_{\theta_{-i}}^{MF} \defeq 
\prod_{j=1, j\neq i}^{K} \mathcal{Q}_{\theta_{j}}^{m}=
\mathcal{Q}_{\theta_{1}}^{m}\times  \cdots \times \mathcal{Q}_{\theta_{i-1}}^{m}\times  \mathcal{Q}_{\theta_{i+1}}^{m} \times  \cdots \times  \mathcal{Q}_{\theta_{K}}^{m}\\
\nonumber
&\,\,=
\bigg\{q(\theta_{-i}) 
\,\bigg|\,q(\theta_{-i}) 
=
\prod_{j=1,j\neq i}^{K}q(\theta_{j})
= q(\theta_{1})\cdots q(\theta_{i-1})
\cdot q(\theta_{i+1}) \cdots q(\theta_{K}), \,q(\theta_{j}) \in \mathcal{Q}_{\theta_{j}}^{m} 
\bigg\}.
\end{align}

Similarly, we define a set $\mathcal{Q}_{\theta_{-i}|\textbf{y}}^{MF}$ as the set obtained by Cartesian product of $K-1$ sets of posterior marginal densities $\{\mathcal{Q}_{\theta_{j}|\textbf{y}}^{m}\}_{j=1,j\neq i}^{K}$
\begin{align}
\label{eq:the_i_th_complemental_variational_family_posterirori}
\mathcal{Q}_{\theta_{-i}|\textbf{y}}^{MF}  
&\defeq 
\prod_{j=1, j\neq i}^{K} \mathcal{Q}_{\theta_{j}|\textbf{y}}^{m}=
\mathcal{Q}_{\theta_{1}|\textbf{y}}^{m}\times  \cdots \times \mathcal{Q}_{\theta_{i-1}|\textbf{y}}^{m} \times  \mathcal{Q}_{\theta_{i+1}|\textbf{y}}^{m} \times  \cdots \times  \mathcal{Q}_{\theta_{K}|\textbf{y}}^{m}.
\end{align}
\end{itemize}

Figure \ref{fig:set_inclusion} shows a Venn diagram which depicts set-inclusion relationships formed from the fundamental sets defined in items \textbf{(i)} $-$ \textbf{(vi)}. Some key elements (hence, densities) are marked by symbol $\bullet$. As seen from the panel, by notational definition, two chains of subset-inclusion hold: (1) densities supported on the entire parameter space $\bm{\Theta}$, $\mathcal{Q}_{\bm{\theta}|\textbf{y}}^{MF} \subset \mathcal{Q}_{\bm{\theta}|\textbf{y}} \subset \mathcal{Q}_{\bm{\theta}}$; and (2) densities supported on the $i$-th component parameter space $\Theta_{i}$, $\mathcal{Q}_{\theta_{i}|\textbf{y}}^{m} \subset \mathcal{Q}_{\theta_{i}|\textbf{y}} \subset \mathcal{Q}_{\theta_{i}}$, for each $i=1,\cdots,K$. Here, it holds $\mathcal{Q}_{\bm{\theta}|\textbf{y}}^{MF} \subset \mathcal{Q}_{\bm{\theta}|\textbf{y}}$ because, by definitions, the $\mathcal{Q}_{\bm{\theta}|\textbf{y}}$ is the set of `all the densities' supported on the $\bm{\Theta}$, having observed the $\textbf{y}$, whereas the $\mathcal{Q}_{\bm{\theta}|\textbf{y}}^{MF}$ (\ref{eq:mean-field variational family_a_posteriori}) is the set of `all the densities of the product-form' supported on the $\bm{\Theta}$, having observed the $\textbf{y}$. Furthermore, it holds a proper subset inclusion relationship $\mathcal{Q}_{\bm{\theta}|\textbf{y}}^{MF} \subsetneq \mathcal{Q}_{\bm{\theta}|\textbf{y}}$ as the integer $K$ is greater than $1$.

Because the Venn diagram overlaid these chains on a single panel for visualization purpose, it should not be interpreted that subset-inclusions $\mathcal{Q}_{\theta_{i}|\textbf{y}}^{m} \subset \mathcal{Q}_{\bm{\theta}|\textbf{y}}^{MF}$, $\mathcal{Q}_{\theta_{i}|\textbf{y}} \subset \mathcal{Q}_{\bm{\theta}|\textbf{y}}$, and $\mathcal{Q}_{\theta_{i}} \subset \mathcal{Q}_{\bm{\theta}}$ hold for each $i$ ($i=1,\cdots,K$). Rather, it should be interpreted that each of the sets $\mathcal{Q}_{\theta_{i}|\textbf{y}}^{m}$, $\mathcal{Q}_{\theta_{i}|\textbf{y}}$, and $\mathcal{Q}_{\theta_{i}}$ participates to each of the sets $\mathcal{Q}_{\bm{\theta}|\textbf{y}}^{MF}$, $\mathcal{Q}_{\bm{\theta}|\textbf{y}}$, and $\mathcal{Q}_{\bm{\theta}}$ as a piece, respectively. 

\section{Gibbs sampler and CAVI algorithm}\label{sec:A review for a Gibbs sampler and CAVI algorithm}
\subsection{\textbf{Gibbs sampling algorithm}}\label{subsec:Gibbs sampling algorithm}
Consider a Bayesian model $\{p(\textbf{y}|\bm{\theta}),\pi(\bm{\theta})\}$ as illustrated in Section \ref{sec:Introduction}. The Gibbs sampler algorithm \citep{geman1984stochastic,casella1992explaining} is a Markov chain Monte Carlo (MCMC) sampling scheme to approximate the target density $\pi(\bm{\theta}|\textbf{y}) \in \mathcal{Q}_{\bm{\theta}|\textbf{y}}$ (\ref{eq:posterior_distribution}). A single cycle of the Gibbs sampler is executed by iteratively drawing a sample from each of the full conditional posterior densities
\begin{align}
\label{eq:full_conditional_for_theta_i}
\pi(\theta_{i}|\theta_{-i},\textbf{y}) 
=
\pi(\theta_{i}|\theta_{1},\cdots,\theta_{i-1},\theta_{i+1},\cdots,\theta_{K}, \textbf{y}) 
\in \mathcal{Q}_{\theta_{i}|\textbf{y}}, \quad (i=1,\cdots, K),
\end{align}
while fixing other full conditional posterior densities. In each of the $K$ steps within a cycle, latent variables conditioned on the density (\ref{eq:full_conditional_for_theta_i}) (i.e., $\theta_{-i} \in \Theta_{-i} $) are updated with the most recently drawn values. See  \citep{gelfand1990sampling,gelfand2000gibbs} for a comprehensive review of Gibbs sampler. Algorithm \ref{alg:Gibbs sampler} details a generic Gibbs sampler.

\begin{algorithm}[h]
\caption{Gibbs sampler}\label{alg:Gibbs sampler}
\SetAlgoLined
\textbf{Initialize: } pick arbitrary starting value $\bm{\theta}^{(1)} = (\theta_{1}^{(1)},\theta_{2}^{(1)},\cdots, \theta_{K}^{(1)} ) \sim h(\bm{\theta}) \in \mathcal{Q}_{\bm{\theta}}$\\
\textbf{Iterate a cycle:}\\
$\quad$ $\emph{Step 1.}$ $\text{draw}$  $\theta_{1}^{(s+1)} \sim \pi(\theta_{1}|\theta_{2}^{(s)},\theta_{3}^{(s)},\cdots,\theta_{K}^{(s)},\textbf{y}) \in \mathcal{Q}_{\theta_{1}|\textbf{y}}$\\
$\quad$ $\emph{Step 2.}$ $\text{draw}$ $\theta_{2}^{(s+1)} \sim \pi(\theta_{2}|\theta_{1}^{(s+1)},\theta_{3}^{(s)},\cdots,\theta_{K}^{(s)},\textbf{y})\in \mathcal{Q}_{\theta_{2}|\textbf{y}}$\\
$\quad\quad\quad\quad\quad\quad\quad\quad\quad$ $\vdots$\\
$\quad$ $\emph{Step K.}$ $\text{draw}$ $\theta_{K}^{(s+1)} \sim \pi(\theta_{K}|\theta_{1}^{(s+1)},\theta_{2}^{(s+1)},\cdots,\theta_{K-1}^{(s+1)},\textbf{y})\in \mathcal{Q}_{\theta_{K}|\textbf{y}}$\\
\textbf{end Iterate}
\end{algorithm}
Algorithm \ref{alg:Gibbs sampler} produces a Markov chain $\{\bm{\theta}^{(1)}$, $\cdots$, $\bm{\theta}^{(s)}$, $\bm{\theta}^{(s+1)}$, $\cdots\}$ on the parameter space $\bm{\Theta}$. The transition kernel $K_{\text{G}}(\cdot,\cdot): \bm{\Theta} \times \bm{\Theta} \rightarrow [0,\infty)$ from $\bm{\theta}^{(s)} = (\theta_{1}^{(s)},\theta_{2}^{(s)},\cdots,\theta_{K}^{(s)})$ to $\bm{\theta}^{(s+1)} = (\theta_{1}^{(s+1)},\theta_{2}^{(s+1)},\cdots,\theta_{K}^{(s+1)})$ underlying the Gibbs sampler is
\begin{align}
\label{eq:transition_kernel_Gibbs}
K_{\text{G}}(\bm{\theta}^{(s)},\bm{\theta}^{(s+1)})
&=
\prod_{i=1}^{K}
\pi(\theta_{i}^{(s+1)}|\theta_{j}^{(s)}, j > i ,
\theta_{j}^{(s+1)}, j < i ,\textbf{y}).
\end{align}
Considerable theoretical works have been done on establishing the convergence of the Gibbs sampler for particular applications \citep{roberts1994geometric,roberts1994simple,cowles1996markov}. Under a mild condition (for example, Lemma 1 in \citep{smith1993bayesian}), one can prove that the stationary distribution (or the invariant distribution) for the above Markov chain is the target density $\pi(\bm{\theta}|\textbf{y})$ (\ref{eq:posterior_distribution}).
\subsection{\textbf{Coordinate ascent variational inference algorithm}}\label{subsec:Coordinate ascent variational inference algorithm}
Consider a Bayesian model $\{p(\textbf{y}|\bm{\theta}),\pi(\bm{\theta})\}$ as explained in Section \ref{sec:Introduction}. Variational inference is a deterministic functional optimization method to approximate the target density $\pi(\bm{\theta}|\textbf{y})\in \mathcal{Q}_{\bm{\theta}|\textbf{y}}$ (\ref{eq:posterior_distribution}) with another density $q(\bm{\theta}) \in \widetilde{\mathcal{Q}}_{\bm{\theta}}  \subseteq \mathcal{Q}_{\bm{\theta}}$ where $\widetilde{\mathcal{Q}}_{\bm{\theta}}$ is a set of candidate densities. To induce a good approximation, we wish that the set $\widetilde{\mathcal{Q}}_{\bm{\theta}}$ contains some nice elements close enough to the target density $\pi(\bm{\theta}|\textbf{y})$. It is important to emphasize that the Gibbs sampler (or any other MCMC/MC sampling techniques) presumes that it holds $\widetilde{\mathcal{Q}}_{\bm{\theta}} = \mathcal{Q}_{\bm{\theta}}$. In contrast, when implementing variational inference techniques, we often specify the set $\widetilde{\mathcal{Q}}_{\bm{\theta}} $ as a proper subset of $\mathcal{Q}_{\bm{\theta}}$ (i.e., $\widetilde{\mathcal{Q}}_{\bm{\theta}} \subsetneq \mathcal{Q}_{\bm{\theta}}$), yet the set $\widetilde{\mathcal{Q}}_{\bm{\theta}}$ still contains some nice candidate densities which can be computationally feasible to find.

Mean-field variational inference (MFVI) \citep{beal2003variational,jordan1999introduction} is a special kind of variational inference, principled on mean-field theory \citep{chandler1987introduction}, where candidate densities are searched over the product-form distributions $q(\bm{\theta})\in \widetilde{\mathcal{Q}}_{\bm{\theta}} = \mathcal{Q}_{\bm{\theta}}^{MF}\subsetneq \mathcal{Q}_{\bm{\theta}}$ (\ref{eq:mean-field variational family}). The theoretical aim of MFVI is to minimize the Kullback-Leibler divergence between $q(\bm{\theta})$ and $\pi(\bm{\theta}|\textbf{y})$:
\begin{align}
\label{eq:variational_density_general}
\hat{q}^{MF}(\bm{\theta}) &\defeq  
\text{argmin}_{q(\bm{\theta}) \in \mathcal{Q}_{\bm{\theta}}^{MF}}
\text{KL}(q(\bm{\theta})||\pi(\bm{\theta}|\textbf{y}))\in \mathcal{Q}_{\bm{\theta}|\textbf{y}}^{MF}.
\end{align}
The output density $\hat{q}^{MF}(\bm{\theta})$ is referred to as the global minimizer \citep{Zhang_AOS}. In practice, the global minimizer is not very useful. This is because for complex Bayesian machine learning models such as Bayesian deep learning \citep{gal2016uncertainty}, latent Dirichlet allocation \citep{blei2003latent}, etc, the dimension of the parameter space $\bm{\Theta}$ is very high, and finding the global minimizer $\hat{q}^{MF}(\bm{\theta})$ directly from the set $\mathcal{Q}_{\bm{\theta}}^{MF}$ is often computationally infeasible.

The CAVI algorithm \citep{bishop2006pattern,blei2017variational} is an algorithm addressing this computational issue by iteratively minimizing the Kullback-Leibler divergence between $q(\theta_{i}) \in \mathcal{Q}_{\theta_{i}}^{m}$ and $\pi(\bm{\theta}|\textbf{y})$ while fixing others $q^{*}(\theta_{j})$ ($j\neq i$) to be most recently updated ones:
\begin{align}
\label{eq:CAVI_KL_expression}
q^{*}(\theta_{i})&\leftarrow
\text{argmin}_{
q(\theta_{i}) \in \mathcal{Q}_{\theta_{i}}^{m}
}
\text{KL}\bigg(
q(\theta_{i})
\cdot
\prod_{j=1,j\neq i}^{K}q^{*}(\theta_{j})
\bigg|\bigg|\pi(\bm{\theta}|\textbf{y})
\bigg)
\in \mathcal{Q}_{\theta_{i}|\textbf{y}}^{m}.
\end{align}
In (\ref{eq:CAVI_KL_expression}), superscript $*$ is used to indicate that the corresponding density has been \emph{updated}. The output $q^{*}(\theta_{i})$ ($i=1,\cdots,K$) (\ref{eq:CAVI_KL_expression}) is referred to as the $i$-th variational factor \citep{blei2017variational}. The final output of the CAVI algorithm is a product-form joint density $q^{*}(\bm{\theta}) = \prod_{i=1}^{K}q^{*}(\theta_{i}) \in \mathcal{Q}_{\bm{\theta}|\textbf{y}}^{MF}$ (\ref{eq:mean-field variational family_a_posteriori}) obtained by repeating the iteration (\ref{eq:CAVI_KL_expression}) until the Kullback-Leibler divergence on the right-hand side of (\ref{eq:CAVI_KL_expression}) is small enough: this final output $q^{*}(\bm{\theta})$ is referred to as the VB posterior \citep{wang2019frequentist}. We can obtain the closed-form expression of the variational factor $q^{*}(\theta_{i})$ (\ref{eq:CAVI_KL_expression}):
\begin{lemma} 
Consider a Bayesian model $\{p(\textbf{y}|\bm{\theta})$,$\pi(\bm{\theta})\}$ with the entire parameter space $\bm{\Theta}$ decomposed as (\ref{eq:decomposition_of_entire_ps}). Provided mean-field variational family $\mathcal{Q}_{\bm{\theta}}^{MF}$ (\ref{eq:mean-field variational family}), assume that the CAVI algorithm (\ref{eq:CAVI_KL_expression}) is employed to approximate the target density $\pi(\bm{\theta}|\textbf{y})$ (\ref{eq:posterior_distribution}).
\\
\noindent Then the $i$-th variational factor $q^{*}(\theta_{i})$ is
\begin{align}
\label{eq:variational_factor_theta_i}
q^{*}(\theta_{i}) &= \frac{\exp\ \mathbb{E}_{q^{*}(\theta_{-i})}[  \log\ \pi(\theta_{i}|\theta_{-i},\textbf{y}) ]}{\int \exp\ \mathbb{E}_{q^{*}(\theta_{-i})}[  \log\ \pi(\theta_{i}|\theta_{-i},\textbf{y}) ] d\theta_{i}},
\end{align}
where $q^{*}(\theta_{-i}) = \prod_{j=1,j\neq i}^{K}q^{*}(\theta_{j})$. 
\end{lemma}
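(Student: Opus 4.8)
The plan is to reduce the coordinate-wise minimization in (\ref{eq:CAVI_KL_expression}) to the maximization problem appearing on the right-hand side of the continuous duality formula (\ref{eq:duality_formula_continuous_version}), and then to read off the optimizer directly from the attainment condition of Theorem \ref{theorem:Duality formula for variational inference}.

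First I would fix the index $i$ and treat the factors $q^{*}(\theta_{j})$ ($j \neq i$) as known, writing $q(\bm{\theta}) = q(\theta_{i})\, q^{*}(\theta_{-i})$ with $q^{*}(\theta_{-i}) = \prod_{j \neq i} q^{*}(\theta_{j})$. I would expand the objective $\text{KL}(q(\theta_{i})\, q^{*}(\theta_{-i}) \,\|\, \pi(\bm{\theta}|\textbf{y}))$ and insert the factorization $\pi(\bm{\theta}|\textbf{y}) = \pi(\theta_{i}|\theta_{-i},\textbf{y})\,\pi(\theta_{-i}|\textbf{y})$ from item \textbf{(iii)}. Splitting the logarithm and integrating $\theta_{-i}$ out against the fixed product density $q^{*}(\theta_{-i})$, the entropy term of $q^{*}(\theta_{-i})$ and the term $\mathbb{E}_{q^{*}(\theta_{-i})}[\log \pi(\theta_{-i}|\textbf{y})]$ collapse into an additive constant $C$ that plays no role in the optimization over $q(\theta_{i})$. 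Introducing the potential
\[
h(\theta_{i}) \defeq \mathbb{E}_{q^{*}(\theta_{-i})}[\log \pi(\theta_{i}|\theta_{-i},\textbf{y})],
\]
the objective becomes $\mathbb{E}_{q(\theta_{i})}[\log q(\theta_{i})] - \mathbb{E}_{q(\theta_{i})}[h(\theta_{i})] + C$, so that minimizing the Kullback-Leibler divergence over $q(\theta_{i}) \in \mathcal{Q}_{\theta_{i}}^{m}$ is equivalent to maximizing $\mathbb{E}_{q(\theta_{i})}[h(\theta_{i})] - \mathbb{E}_{q(\theta_{i})}[\log q(\theta_{i})]$.

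Next I would match this reduced functional to the duality formula. Choosing any reference density $p(\theta_{i})$ on $\Theta_{i}$ and defining the effective potential $\tilde{h}(\theta_{i}) \defeq h(\theta_{i}) - \log p(\theta_{i})$, a short computation shows that $\mathbb{E}_{q(\theta_{i})}[h(\theta_{i})] - \mathbb{E}_{q(\theta_{i})}[\log q(\theta_{i})]$ equals $\mathbb{E}_{q(\theta_{i})}[\tilde{h}(\theta_{i})] - \text{KL}(q(\theta_{i})\|p(\theta_{i}))$, which is exactly the argument of the supremum in (\ref{eq:duality_formula_continuous_version}) with base density $p(\theta_{i})$. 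Applying Theorem \ref{theorem:Duality formula for variational inference}, the supremum is attained if and only if $q(\theta_{i})/p(\theta_{i}) = \exp \tilde{h}(\theta_{i}) / \mathbb{E}_{p(\theta_{i})}[\exp \tilde{h}]$; since $\exp \tilde{h}(\theta_{i}) = \exp h(\theta_{i})/p(\theta_{i})$ and $\mathbb{E}_{p(\theta_{i})}[\exp \tilde{h}] = \int \exp h(\theta_{i})\, d\theta_{i}$, the reference $p(\theta_{i})$ cancels and the optimizer is $q^{*}(\theta_{i}) = \exp h(\theta_{i}) / \int \exp h(\theta_{i})\, d\theta_{i}$, which is precisely (\ref{eq:variational_factor_theta_i}).

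I expect the main obstacle to be careful bookkeeping in the first step rather than any deep difficulty: one must verify that the discarded terms genuinely do not depend on $q(\theta_{i})$, and justify the Fubini interchange that rewrites the cross term as $\mathbb{E}_{q(\theta_{i})}[h(\theta_{i})]$, which is harmless given the product form of $q^{*}(\theta_{-i})$. A secondary technical point is confirming the integrability hypothesis of the duality theorem, namely $\exp \tilde{h} \in L_{1}(p)$, equivalently $\int \exp h(\theta_{i})\, d\theta_{i} < \infty$, so that the normalizing constant in (\ref{eq:variational_factor_theta_i}) is finite and $q^{*}(\theta_{i})$ is a bona fide density in $\mathcal{Q}_{\theta_{i}|\textbf{y}}^{m}$.
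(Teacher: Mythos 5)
Your proposal is correct, and its first half coincides with the paper's own proof: both fix the index $i$, factor the target as $\pi(\bm{\theta}|\textbf{y}) = \pi(\theta_{i}|\theta_{-i},\textbf{y})\,\pi(\theta_{-i}|\textbf{y})$, integrate out $\theta_{-i}$ against the fixed product density $q^{*}(\theta_{-i})$, and discard all terms not depending on $q(\theta_{i})$, leaving the potential $h(\theta_{i}) = \mathbb{E}_{q^{*}(\theta_{-i})}[\log \pi(\theta_{i}|\theta_{-i},\textbf{y})]$. Where you part ways is the closing step. The paper does \emph{not} invoke Theorem \ref{theorem:Duality formula for variational inference} here: it sets $\nu(\theta_{i}) = \exp h(\theta_{i})$ and observes that the reduced objective equals $\text{KL}\big(q(\theta_{i}) \,\|\, \nu(\theta_{i})/\int \nu(\theta_{i})\, d\theta_{i}\big)$ plus a constant, which is minimized (indeed vanishes) iff $q(\theta_{i})$ equals the normalized $\nu$. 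You instead introduce an auxiliary reference density $p(\theta_{i})$, rewrite the reduced objective as $\mathbb{E}_{q(\theta_{i})}[\tilde{h}(\theta_{i})] - \text{KL}(q(\theta_{i})\|p(\theta_{i}))$ with $\tilde{h} = h - \log p$, and read the optimizer off the attainment condition of the duality theorem, after which the reference cancels. Both arguments rest on the same Jensen-type fact (the attainment condition in Theorem \ref{theorem:Duality formula for variational inference} is itself proved by exactly the KL-vanishing argument the paper uses directly), so neither is more general; yours is slightly less elementary but has the expository merit of deriving the CAVI update from the paper's central theorem, which the paper only deploys for Corollaries \ref{cor:Gibbs_sampler_revisited} and \ref{cor:CAVI_algorithm_revisited}.

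Two points need tightening to make your route airtight. First, the reference density must be chosen strictly positive on $\Theta_{i}$, so that $\log p(\theta_{i})$ is finite everywhere and every candidate $q(\theta_{i})$ satisfies the domination condition $q \ll p$ required by the theorem; ``any reference density'' is too loose. Second, the duality supremum ranges over \emph{all} $q \ll p$, whereas the CAVI step (\ref{eq:CAVI_KL_expression}) optimizes only over $\mathcal{Q}_{\theta_{i}}^{m}$; you should note that the unconstrained maximizer $\exp h(\theta_{i})/\int \exp h(\theta_{i})\, d\theta_{i}$ itself belongs to $\mathcal{Q}_{\theta_{i}}^{m}$ (indeed to $\mathcal{Q}_{\theta_{i}|\textbf{y}}^{m}$), so restricting the feasible set changes neither the supremum nor the maximizer. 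The integrability caveat $\int \exp h(\theta_{i})\, d\theta_{i} < \infty$ you already flag correctly; the paper's proof passes over it in silence as well.
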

\begin{proof}
To avoid notation clutters, we omit the superscipt $*$ on the $q^{*}(\theta_{i})$ and $q^{*}(\theta_{-i})$. We start with splitting the divergence $\text{KL}(q(\theta_{i})\cdot q(\theta_{-i})||
\pi(\bm{\theta}|\textbf{y}))$ into two integrals
\begin{align}
\nonumber
&\text{KL}(q(\theta_{i})\cdot q(\theta_{-i})||
\pi(\bm{\theta}|\textbf{y}))\\
\label{eq:variational_factor_proof_1}
&\,\,=
\int
q(\theta_{i})\cdot q(\theta_{-i})
\log \{q(\theta_{i}) \cdot q(\theta_{-i})\}
d\bm{\theta}
-
\int
q(\theta_{i})\cdot q(\theta_{-i})
\log \pi(\bm{\theta}|\textbf{y}))
d\bm{\theta}.
\end{align}
The first integral on the right-hand side of (\ref{eq:variational_factor_proof_1}) can be further splitted because two densities, $q(\theta_{i}) \in \mathcal{Q}_{\theta_{i}}^{m}$ and $q(\theta_{-i})\in \mathcal{Q}_{\theta_{-i}}^{MF}$ (\ref{eq:the_i_th_complemental_variational_family}), are independent by the mean-field assumption (\ref{eq:mean-field variational family})
\begin{align}
\label{eq:variational_factor_proof_2}
\int
q(\theta_{i})\cdot q(\theta_{-i})
\log \{q(\theta_{i}) \cdot q(\theta_{-i})\}
d\bm{\theta}
=
\int
q(\theta_{i}) \log q(\theta_{i})
d\theta_{i}
+
\int
q(\theta_{-i})
\log q(\theta_{-i})
d\theta_{-i}.
\end{align}
Note that the second integral on the right-hand side of (\ref{eq:variational_factor_proof_2}) is constant with respect to $q(\theta_{i}) \in \mathcal{Q}_{\theta_{i}}^{m}$.

\noindent The second integral on the right-hand side of (\ref{eq:variational_factor_proof_1}) can be further splitted by using $\pi(\bm{\theta}|\textbf{y}) = \pi(\theta_{i}|\theta_{-i},\textbf{y})
\cdot
\pi(\theta_{-i}|\textbf{y})$
\begin{align}
\nonumber
&\int
q(\theta_{i})\cdot q(\theta_{-i})
\log \pi(\bm{\theta}|\textbf{y}))
d\bm{\theta}
\\
\label{eq:variational_factor_proof_3}
&
\,\,=
\int
q(\theta_{i})\cdot q(\theta_{-i})
\log 
\pi(\theta_{i}|\theta_{-i},\textbf{y})
d\bm{\theta}
+
\int
q(\theta_{i})\cdot q(\theta_{-i})
\log 
\pi(\theta_{-i}|\textbf{y})
d\bm{\theta}.
\end{align}
\noindent Note that the second integral on the right-hand side of (\ref{eq:variational_factor_proof_3}) is constant with respect to $q(\theta_{i}) \in \mathcal{Q}_{\theta_{i}}^{m}$. This is because we have
$\int
q(\theta_{i}) \cdot q(\theta_{-i})
\log 
\pi(\theta_{-i}|\textbf{y})
d\bm{\theta}
$
$
=
\int \{\int
q(\theta_{i}) \cdot q(\theta_{-i})
\log 
\pi(\theta_{-i}|\textbf{y})
d\theta_{-i}
\}
d\theta_{i}
$
$
=
\int
q(\theta_{i})
\{
\int
q(\theta_{-i})
$ 
$
\log 
\pi(\theta_{-i}|\textbf{y})
d\theta_{-i}
\}
d\theta_{i}$
$=
\int
q(\theta_{-i})
\log 
\pi(\theta_{-i}|\textbf{y})
$
$
d\theta_{-i}
\cdot
\int
q(\theta_{i})
d\theta_{i}
=
\int
q(\theta_{-i})
\log 
\pi(\theta_{-i}|\textbf{y})
$
$
d\theta_{-i}
$, which is independent of $q(\theta_{i})$.

\noindent
Let us introduce a function $\nu(\theta_{i}) = \exp \mathbb{E}_{q(\theta_{-i})}[\log \pi(\theta_{i} |\theta_{-i},\textbf{y})]$ on $\Theta_{i}$. The function $\nu(\theta_{i})$ is not a density, but after normalization, the function $\nu(\theta_{i})/\int \nu(\theta_{i}) d\theta_{i} \in \mathcal{Q}_{\theta_{i}|\textbf{y}}^{m}$ becomes a marginal density supported on $\Theta_{i}$. Now, we can simplify the first integral on the right-hand side of (\ref{eq:variational_factor_proof_3}) as follows 
\begin{align}
\nonumber
\int
q(\theta_{i})\cdot q(\theta_{-i})
\log 
\pi(\theta_{i}|\theta_{-i},\textbf{y})
d\bm{\theta}
&=
\int
q(\theta_{i}) \cdot 
\log
\nu(\theta_{i})
d\theta_{i}
\\
\label{eq:variational_factor_proof_4}
&=
\int
q(\theta_{i}) \cdot 
\log
\bigg(\frac{\nu(\theta_{i})}{\int \nu(\theta_{i}) d\theta_{i}}
\bigg)
d\theta_{i}
+
\log \bigg(\int \nu(\theta_{i}) d\theta_{i}\bigg).
\end{align}
\noindent Note that the second integral on the right-hand side of (\ref{eq:variational_factor_proof_4}) is constant with respect to $q(\theta_{i}) \in \mathcal{Q}_{\theta_{i}}^{m}$.

\noindent By using the derived results (\ref{eq:variational_factor_proof_2}) -- (\ref{eq:variational_factor_proof_4}), we can re-express the $\text{KL}(q(\theta_{i})\cdot q(\theta_{-i})||
\pi(\bm{\theta}|\textbf{y}))$ (\ref{eq:variational_factor_proof_1}) as follows
\begin{align*}
\text{KL}(q(\theta_{i})\cdot q(\theta_{-i})||
\pi(\bm{\theta}|\textbf{y}))
&=
\int
q(\theta_{i}) 
\log \bigg(
\frac{q(\theta_{i}) }{
\nu(\theta_{i})/\int\nu(\theta_{i})d\theta_{i}
}
\bigg)
d\theta_{i}
+ \text{const w.r.t } q(\theta_{i})\\
&=
\text{KL}
\bigg(
q(\theta_{i}) 
\bigg|\bigg|
\frac{\nu(\theta_{i})}{\int\nu(\theta_{i})d\theta_{i}}
\bigg) + \text{const w.r.t } q(\theta_{i}).
\end{align*}
The above equality means that for a fixed $q(\theta_{-i}) \in \mathcal{Q}_{\theta_{-i}}^{MF}$,  minimizing the $\text{KL}(q(\theta_{i})\cdot q(\theta_{-i})||
\pi(\bm{\theta}|\textbf{y}))$ with respect to $q(\theta_{i}) \in \mathcal{Q}_{\theta_{i}}^{m}$ is equivalent with minimizing the $\text{KL}
(q(\theta_{i}) ||$ $
\nu(\theta_{i})/\int\nu(\theta_{i})d\theta_{i})$ with respect to $q(\theta_{i}) \in \mathcal{Q}_{\theta_{i}}^{m}$. The latter one vanishes if and only if it holds $q(\theta_{i}) = \nu(\theta_{i})/\int\nu(\theta_{i})d\theta_{i}$.
\end{proof}

\begin{algorithm}[h]
\caption{CAVI algorithm}\label{alg:CAVI algorithm}
\SetAlgoLined
\textbf{Initialize:} pick arbitrary starting variational density
$q(\bm{\theta}) = q(\theta_{1}) q(\theta_{2})\cdots q(\theta_{K})
\in \mathcal{Q}_{\bm{\theta}}^{MF}$ \\
$\quad\quad\quad\quad$ denote $q^{*}(\theta_{1}) \leftarrow q(\theta_{1})$, $q^{*}(\theta_{2}) \leftarrow q(\theta_{2})$, $\cdots$, $q^{*}(\theta_{K}) \leftarrow q(\theta_{K})$
\\
\textbf{Iterate a cycle:}\\
$\quad$ $\emph{Step 1.}$ $\text{set}$ $
q^{*}(\theta_{1}) \leftarrow 
\dfrac{\exp\ \mathbb{E}_{q^{*}(\theta_{-1})}[  \log\ \pi(\theta_{1}|\theta_{-1},\textbf{y}) ]}{\int \exp\ \mathbb{E}_{q^{*}(\theta_{-1})}[  \log\ \pi(\theta_{1}|\theta_{-1},\textbf{y}) ] d\theta_{1}}
\in \mathcal{Q}_{\theta_{1}|\textbf{y}}^{m}
$\\
$\quad$ $\emph{Step 2.}$ $\text{set}$ $
q^{*}(\theta_{2}) \leftarrow 
\dfrac{\exp\ \mathbb{E}_{q^{*}(\theta_{-2})}[  \log\ \pi(\theta_{2}|\theta_{-2},\textbf{y}) ]}{\int \exp\ \mathbb{E}_{q^{*}(\theta_{-2})}[  \log\ \pi(\theta_{2}|\theta_{-2},\textbf{y}) ] d\theta_{2}}
\in \mathcal{Q}_{\theta_{2}|\textbf{y}}^{m}
$\\
$\quad\quad\quad\quad\quad\quad\quad\quad$ $\vdots$\\
$\quad$ $\emph{Step K.}$  $\text{set}$ $
q^{*}(\theta_{K}) \leftarrow 
\dfrac{\exp\ \mathbb{E}_{q^{*}(\theta_{-K})}[  \log\ \pi(\theta_{K}|\theta_{-K},\textbf{y}) ]}{\int \exp\ \mathbb{E}_{q^{*}(\theta_{-K})}[  \log\ \pi(\theta_{K}|\theta_{-K},\textbf{y}) ] d\theta_{K}}
\in \mathcal{Q}_{\theta_{K}|\textbf{y}}^{m}
$\\
\textbf{end Iterate}
\end{algorithm}

Algorithm \ref{alg:CAVI algorithm} details a generic CAVI algorithm. Broadly speaking, when we implement the CAVI algorithm for a Bayesian model $\{p(\textbf{y}|\bm{\theta}),\pi(\bm{\theta})\}$, having specified a certain mean-field variational family $\mathcal{Q}_{\bm{\theta}}^{MF}$ (\ref{eq:mean-field variational family}), we wish two consequences: (a) the VB posterior $q^{*}(\bm{\theta}) = \prod_{i=1}^{K}q^{*}(\theta_{i}) \in \mathcal{Q}_{\bm{\theta}|\textbf{y}}^{MF}$ (\ref{eq:mean-field variational family_a_posteriori}) nicely approximates the global minimizer $\hat{q}^{MF}(\bm{\theta})$ (\ref{eq:variational_density_general}); and then (b) the global minimizer $\hat{q}^{MF}(\bm{\theta})$ nicely approximates our target density $\pi(\bm{\theta}|\textbf{y})$. Most of research articles \citep{bickel2013asymptotic,celisse2012consistency,wang2019frequentist,you2014variational} assume that the global minimum (\ref{eq:variational_density_general}) can be achieved and work on theoretical aspects of the global minimizer $\hat{q}^{MF}(\bm{\theta})$ (\ref{eq:variational_density_general}). On the other hand, there are only a few research works \citep{Zhang_AOS} that directly investigate theoretical aspects of an iterative algorithm. In this paper, we directly study the CAVI algorithm (Algorithm \ref{alg:CAVI algorithm}), and explain how key ingredients used in CAVI are functionally related each other by the duality formula (\ref{eq:duality_formula_continuous_version}).

We convey two messages. First, the full conditional posterior $\pi(\theta_{i}|\theta_{-i},\textbf{y}) \in \mathcal{Q}_{\theta_{i}|\textbf{y}}$ (\ref{eq:full_conditional_for_theta_i}) plays a central role in the updating procedures not only for the Gibbs sampler but also for the CAVI algorithm \citep{ormerod2010explaining}. Second, although the Gibbs sampler eventually leads to the exact target density $\pi(\bm{\theta}|\textbf{y}) \in \mathcal{Q}_{\bm{\theta}|\textbf{y}}$ (\ref{eq:posterior_distribution}) when the number of iterations goes to infinity under reasonably general conditions \citep{roberts1994simple}, this is not guaranteed for the MFVI. Set-theoretically, the later is obvious due to the nonvanishing divergence $\text{KL}(q(\bm{\theta})||\pi(\bm{\theta}|\textbf{y})) > 0$ (\ref{eq:variational_density_general}) induced by the proper subset relationship $\mathcal{Q}_{\bm{\theta}|\textbf{y}}^{MF}\subsetneq \mathcal{Q}_{\bm{\theta}|\textbf{y}}$: refer to Figure \ref{fig:set_inclusion}. 
\section{Gibbs sampler revisited by the duality formula}\label{sec:Gibbs sampler revisited by the duality formula}
The Gibbs transitional kernel (\ref{eq:transition_kernel_Gibbs}) explains the stationary movement of a state $\bm{\theta}\in \bm{\Theta}$ from the present cycle to the next cycle (i.e., longitudinal point of view on the state). In this Section, our focus is to explain within a cycle how the ingredients of the Gibbs samplers, that is, $\pi(\theta_{i}|\theta_{-i},\textbf{y}) \in \mathcal{Q}_{\theta_{i}|\textbf{y}}$, $\pi(\theta_{-i}|\textbf{y})\in \mathcal{Q}_{\theta_{-i}|\textbf{y}}$, and $\pi(\bm{\theta}|\textbf{y}) \in \mathcal{Q}_{\bm{\theta}|\textbf{y}}$, are \emph{functionally} associated each other (i.e., cross-sectional point of view on the densities):
\begin{corollary}\label{cor:Gibbs_sampler_revisited} 
Consider a Bayesian model $\{p(\textbf{y}|\bm{\theta})$,$\pi(\bm{\theta})\}$ with the entire parameter space $\bm{\Theta}$ decomposed as (\ref{eq:decomposition_of_entire_ps}). For each $i$ ($i=1, \cdots,K$), define a functional $\mathcal{F}_{i}: \mathcal{Q}_{\theta_{i}} \rightarrow \mathbb{R}$ induced by the duality formula as follow:
\begin{align}
\label{eq:functional_theta_i}
\mathcal{F}_{i}\{q(\theta_{i})\}
&\defeq 
\mathbb{E}_{q(\theta_{i})}[\log \pi(\theta_{-i}|\theta_{i}, \textbf{y}) ] - \text{KL}(q(\theta_{i})||\pi(\theta_{i}|\textbf{y}) ).
\end{align}
Then the following relations hold.
\begin{itemize}
\item[]\textbf{(a)} The functional $\mathcal{F}_{i}$ is concave over $\mathcal{Q}_{\theta_{i}}$.
\item[]\textbf{(b)} For all densities $q(\theta_{i}) \in \mathcal{Q}_{\theta_{i}|\textbf{y}}$,
\begin{align*}
\mathcal{F}_{i}\{q(\theta_{i})\} \leq \log \pi(\theta_{-i}|\textbf{y}).
\end{align*}
\item[]\textbf{(c)} The functional $\mathcal{F}_{i}$ attains the value $\log \pi(\theta_{-i}|\textbf{y})$ only at the full conditional posterior density $q(\theta_{i})=\pi(\theta_{i}|\theta_{-i},\textbf{y}) \in \mathcal{Q}_{\theta_{i}|\textbf{y}}$ (\ref{eq:full_conditional_for_theta_i}).
\end{itemize}
\end{corollary}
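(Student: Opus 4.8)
The plan is to recognize that the functional $\mathcal{F}_i$ is exactly the quantity appearing inside the supremum on the right-hand side of the duality formula (\ref{eq:duality_formula_continuous_version}), instantiated with a judicious choice of base density and exponent function. Specifically, I would apply (\ref{eq:duality_formula_continuous_version}) on the space $\Theta_i$ with $p(\theta) = \pi(\theta_i|\textbf{y})$, the marginal posterior on $\Theta_i$, and $h(\theta_i) = \log \pi(\theta_{-i}|\theta_i, \textbf{y})$. With these substitutions, the bracketed expression $\mathbb{E}_{q(\theta_i)}[h(\theta_i)] - \text{KL}(q(\theta_i)\|\pi(\theta_i|\textbf{y}))$ appearing in (\ref{eq:duality_formula_continuous_version}) coincides identically with $\mathcal{F}_i\{q(\theta_i)\}$.

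The crux is then evaluating the left-hand side $\log \mathbb{E}_{\pi(\theta_i|\textbf{y})}[\exp h(\theta_i)]$. Since $\exp h(\theta_i) = \pi(\theta_{-i}|\theta_i, \textbf{y})$, I would compute
\begin{align*}
\mathbb{E}_{\pi(\theta_i|\textbf{y})}[\exp h(\theta_i)]
= \int \pi(\theta_{-i}|\theta_i, \textbf{y})\, \pi(\theta_i|\textbf{y})\, d\theta_i
= \int \pi(\bm{\theta}|\textbf{y})\, d\theta_i
= \pi(\theta_{-i}|\textbf{y}),
\end{align*}
using the chain rule $\pi(\theta_{-i}|\theta_i, \textbf{y})\pi(\theta_i|\textbf{y}) = \pi(\bm{\theta}|\textbf{y})$ followed by marginalization over $\theta_i$. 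This identity simultaneously verifies the integrability hypothesis $\exp h \in L_1$ required by Theorem \ref{theorem:Duality formula for variational inference}, since $\pi(\theta_{-i}|\textbf{y})$ is finite. The duality formula therefore delivers the variational identity
\begin{align*}
\log \pi(\theta_{-i}|\textbf{y}) = \sup_{q(\theta_i) \ll \pi(\theta_i|\textbf{y})} \mathcal{F}_i\{q(\theta_i)\}.
\end{align*}

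Parts \textbf{(b)} and \textbf{(c)} are then immediate consequences. For \textbf{(b)}, every posterior density $q(\theta_i) \in \mathcal{Q}_{\theta_i|\textbf{y}}$ is absolutely continuous with respect to $\pi(\theta_i|\textbf{y})$, hence a feasible point in the supremum, so $\mathcal{F}_i\{q(\theta_i)\} \leq \log \pi(\theta_{-i}|\textbf{y})$. For \textbf{(c)}, the attainment clause of Theorem \ref{theorem:Duality formula for variational inference} states that equality holds precisely when $q(\theta_i)/\pi(\theta_i|\textbf{y}) = \exp h(\theta_i)/\mathbb{E}_{\pi(\theta_i|\textbf{y})}[\exp h] = \pi(\theta_{-i}|\theta_i, \textbf{y})/\pi(\theta_{-i}|\textbf{y})$; rearranging with the chain rule yields $q(\theta_i) = \pi(\bm{\theta}|\textbf{y})/\pi(\theta_{-i}|\textbf{y}) = \pi(\theta_i|\theta_{-i}, \textbf{y})$, the full conditional posterior (\ref{eq:full_conditional_for_theta_i}). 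For part \textbf{(a)}, I would argue directly from the structure of $\mathcal{F}_i$: the term $\mathbb{E}_{q(\theta_i)}[\log \pi(\theta_{-i}|\theta_i, \textbf{y})]$ is linear in $q(\theta_i)$, while $-\text{KL}(q(\theta_i)\|\pi(\theta_i|\textbf{y}))$ is concave, since $\text{KL}(q\|p)$ decomposes into the convex negative-entropy functional $q \mapsto \int q \log q\, d\theta_i$ (convexity inherited from $x \mapsto x\log x$) plus a term linear in $q$; a sum of a linear and a concave functional is concave.

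The only genuine obstacle is the opening bookkeeping: correctly identifying $p$ and $h$ and confirming $\exp h \in L_1$ so that Theorem \ref{theorem:Duality formula for variational inference} applies. Once the left-hand side is recognized as $\log \pi(\theta_{-i}|\textbf{y})$ via marginalization, claims \textbf{(b)} and \textbf{(c)} drop out mechanically from the duality formula, and \textbf{(a)} reduces to a routine convexity observation.
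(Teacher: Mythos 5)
Your proposal is correct and takes essentially the same route as the paper's proof: for \textbf{(b)} and \textbf{(c)} you instantiate the duality formula (\ref{eq:duality_formula_continuous_version}) with exactly the paper's substitutions, $p(\theta)=\pi(\theta_{i}|\textbf{y})$ and $h(\theta_{i})=\log\pi(\theta_{-i}|\theta_{i},\textbf{y})$, evaluate the left-hand side as $\log\pi(\theta_{-i}|\textbf{y})$ by the same chain-rule/marginalization identity, and read off the bound and the attainment condition identically. The only immaterial difference is in \textbf{(a)}: you obtain convexity of $q(\theta_{i})\mapsto\text{KL}(q(\theta_{i})\,\|\,\pi(\theta_{i}|\textbf{y}))$ from the negative-entropy-plus-linear-term decomposition, whereas the paper invokes joint convexity of $f$-divergences (Lemma 4.1 of Csisz\'{a}r); both are standard and serve the same purpose.
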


\begin{proof}
\textbf{\emph{(a)}} Let $p(\theta_{i})$ and $q(\theta_{i})$ are elements of the set $\mathcal{Q}_{\theta_{i}}$. For any $ 0 \leq a \leq 1$, we have
\begin{align}
\nonumber
&\mathcal{F}_{i}\{
a
p(\theta_{i})
+
(1 -a )
q(\theta_{i})
 \}\\
 \label{eq:cor_Gibbs_a_pr_2}
&\,\, =
 \int \log \pi(\theta_{-i}|\theta_{i},\textbf{y}) 
 \{
a 
  p(\theta_{i})
+(1 -a )
q(\theta_{i})
 \}
 d\theta_{i}-
 \text{KL}(
a 
  p(\theta_{i})
+
(1 -a )
q(\theta_{i})
||
\pi(\theta_{i}|\textbf{y}) 
 ).
\end{align}
The first term on the right-hand side of (\ref{eq:cor_Gibbs_a_pr_2}) can be written as

\begin{align}
\nonumber
  &\int \log \pi(\theta_{-i}|\theta_{i},\textbf{y}) 
 \{
a 
  p(\theta_{i})
+(1 -a )
q(\theta_{i})
 \}
 d\theta_{i}
\\
 \label{eq:cor_Gibbs_a_pr_3}
&\,\,  =
a \mathbb{E}_{p(\theta_{i})}[\log \pi(\theta_{-i}|\theta_{i}, \textbf{y}) ]
+
(1-a) \mathbb{E}_{q(\theta_{i})}[\log \pi(\theta_{-i}|\theta_{i}, \textbf{y}) ],
\end{align}
where the expectation $\mathbb{E}_{p(\theta_{i})}[\cdot]$ and $\mathbb{E}_{q(\theta_{i})}[\cdot]$ are taken with respect to the densities $p(\theta_{i})$ and $q(\theta_{i})$, respectively.

The (negative of) second term on the right-hand side (\ref{eq:cor_Gibbs_a_pr_2}) satisfies the following inequality
\begin{align}
 \label{eq:cor_Gibbs_a_pr_4}
&\text{KL}(
a 
  p(\theta_{i})
+
(1 -a )
q(\theta_{i})
||
\pi(\theta_{i}|\textbf{y}) 
 )
\leq
a 
 \text{KL}(
  p(\theta_{i})
||
\pi(\theta_{i}|\textbf{y}) 
 )
 +
 (1-a)
  \text{KL}(
  q(\theta_{i})
||
\pi(\theta_{i}|\textbf{y}) 
 ).
\end{align}
The inequality (\ref{eq:cor_Gibbs_a_pr_4}) generally holds due to the joint convexity of the $f$-divergence; see Lemma 4.1 of \citep{csiszar2004information}.

Now, use the expression (\ref{eq:cor_Gibbs_a_pr_3}) and inequality (\ref{eq:cor_Gibbs_a_pr_4}) to finish the proof:
\begin{align*}
\mathcal{F}_{i}\{
a
p(\theta_{i})
+
(1 -a )
q(\theta_{i})
 \}
& \geq
 a
\{
 \mathbb{E}_{p(\theta_{i})}[\log \pi(\theta_{-i}|\theta_{i}, \textbf{y}) ]
-
  \text{KL}(
  p(\theta_{i})
||
\pi(\theta_{i}|\textbf{y}) )
\} 
\\
&+
(1-a)
\{
 \mathbb{E}_{q(\theta_{i})}[\log \pi(\theta_{-i}|\theta_{i}, \textbf{y}) ]
 -
   \text{KL}(
  p(\theta_{i})
||
\pi(\theta_{i}|\textbf{y}) )
 \} 
\\ 
 &=
 a
 \mathcal{F}_{i}\{
p(\theta_{i})
 \}
 +
 (1-a)
  \mathcal{F}_{i}\{
q(\theta_{i})
 \}.
 \end{align*}
\noindent \textbf{\emph{ (b) and (c)}} For each $i=1, \cdots, K$, use the duality formula (\ref{eq:duality_formula_continuous_version}) by replacing the $q(\theta)$, $p(\theta)$, and $h(\theta)$ in the formula with $q(\theta_{i}) \in \mathcal{Q}_{\theta_{i}}$, $\pi(\theta_{i}|\textbf{y})\in \mathcal{Q}_{\theta_{i}|\textbf{y}}$, and $\log \pi(\theta_{-i}|\theta_{i},\textbf{y})\in \mathcal{Q}_{\theta_{-i}|\textbf{y}}$, respectively. (Recall that in the formula (\ref{eq:duality_formula_continuous_version}), $q$ and $p$ need to be densities, whereas $h$ is a measurable function.) This leads to the following equality
\begin{align}
\label{eq:Gibbs_sampler_derivation_pf_1}
&\log\eE_{\pi(\theta_{i}|\textbf{y})}[\pi(\theta_{-i}|\theta_{i},\textbf{y})]
= \text{sup}_{q(\theta_{i}) \ll \pi(\theta_{i}|\textbf{y})} \{
\eE_{q(\theta_{i})}[\log \pi(\theta_{-i}|\theta_{i},\textbf{y})]-
\KL(q(\theta_{i}) \|\pi(\theta_{i}|\textbf{y})) \},
\end{align}
where the supremum on the right-hand side of (\ref{eq:Gibbs_sampler_derivation_pf_1}) is attained if and only if
\begin{align*}
q(\theta_{i}) &= \pi(\theta_{i}|\textbf{y}) \cdot \frac{\pi(\theta_{-i}|\theta_{i},\textbf{y})}{\eE_{\pi(\theta_{i}|\textbf{y})}[\pi(\theta_{-i}|\theta_{i},\textbf{y})]} = \pi(\theta_{i}|\theta_{-i},\textbf{y}) \in \mathcal{Q}_{\theta_{i}|\textbf{y}}.
\end{align*}
\noindent On the other hand, it is straightforward to derive that the left-hand side of (\ref{eq:Gibbs_sampler_derivation_pf_1}) is $\log\eE_{\pi(\theta_{i}|\textbf{y})}[\pi(\theta_{-i}|\theta_{i},\textbf{y})]=\log \pi(\theta_{-i}|\textbf{y})$.

\noindent Finalize the proof by using the above facts: by (\ref{eq:Gibbs_sampler_derivation_pf_1}), it holds the following inequality
\begin{align}
\nonumber
\mathcal{F}_{i}\{q(\theta_{i})\}=
\eE_{q(\theta_{i})}[\log \pi(\theta_{-i}|\theta_{i},\textbf{y})]-
\KL(q(\theta_{i}) \|\pi(\theta_{i}|\textbf{y})) \leq \log \pi(\theta_{-i}|\textbf{y})
\end{align}
for all density $q(\theta_{i})$ supported on $\Theta_{i}$ which satisfies the dominating condition $q(\theta_{i}) \ll \pi(\theta_{i}|\textbf{y})$, where the equality holds if and only if $q(\theta_{i}) = \pi(\theta_{i}|\theta_{-i},\textbf{y}) \in \mathcal{Q}_{\theta_{i}|\textbf{y}}$. 
\end{proof}
Corollary \ref{cor:Gibbs_sampler_revisited} states the full conditional posterior distribution $\pi(\theta_{1}|\theta_{-1},\textbf{y}) \in \mathcal{Q}_{\theta_{1}|\textbf{y}}$ (and likewisely $\pi(\theta_{2}|\theta_{-2},\textbf{y})\in \mathcal{Q}_{\theta_{2}|\textbf{y}}$, $\cdots$, $\pi(\theta_{K}|\theta_{-K},\textbf{y})\in \mathcal{Q}_{\theta_{K}|\textbf{y}}$) is the global maximum of the induced functional $\mathcal{F}_{1}$ (\ref{eq:functional_theta_i}) (and likewisely $\mathcal{F}_{2}$, $\cdots$, $\mathcal{F}_{K}$) with the corresponding maximum value to be $\log \pi(\theta_{-1}|\textbf{y}) \in \mathbb{R}$ (and likewisely $\log \pi(\theta_{-2}|\textbf{y})\in \mathbb{R}$, $\cdots$, $\log \pi(\theta_{-K}|\textbf{y})\in \mathbb{R}$). See Figure \ref{fig:Gibbs_sampler_picture} for a pictorial description of Corollary \ref{cor:Gibbs_sampler_revisited}. 
\begin{figure}[h]
\centering
\includegraphics[width=1\textwidth]{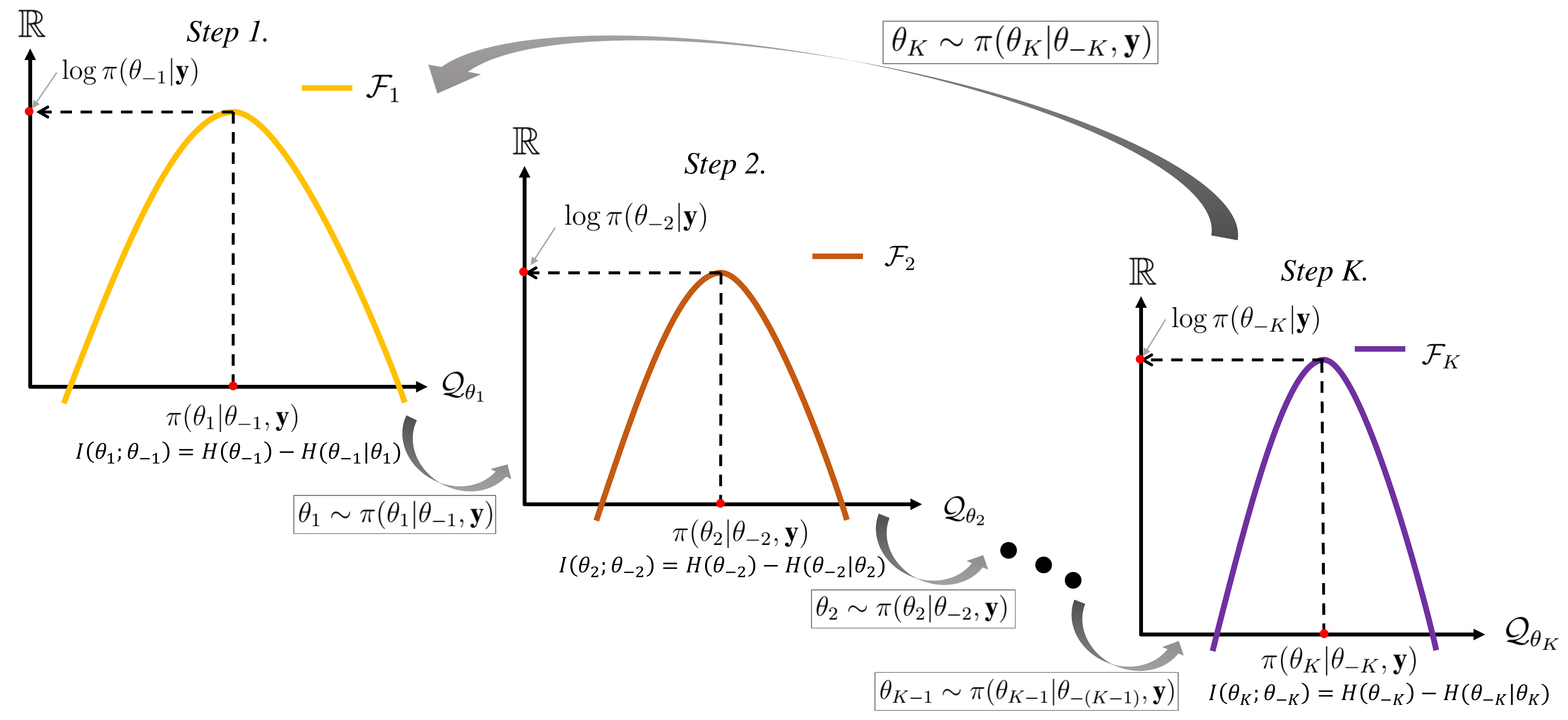}
\caption{Pictorial description of Gibbs sampler (Algorithm \ref{alg:Gibbs sampler}). At the $i$-th step within a cycle ($i=1,\cdots,K$), the panel shows that the $i$-th full conditional posterior $\pi(\theta_{i}|\theta_{-i},\textbf{y})$ (\ref{eq:full_conditional_for_theta_i}) is the global maximum of the induced functional $\mathcal{F}_{i}$ (\ref{eq:functional_theta_i}) with $\log \pi(\theta_{-i}|\textbf{y})$ as the maximum value. They are related with the information equality (\ref{eq:Gibbs_information_equality}).}
\label{fig:Gibbs_sampler_picture}
\end{figure}

We can derive information equations governing the Gibbs sampler by using the Corollary \ref{cor:Gibbs_sampler_revisited}. These equations tell us how much amount information is transmitted at each step within a cycle.
\begin{corollary}\label{corollary:Information_theory_Gibbs_sampler}Consider a Bayesian model $\{p(\textbf{y}|\bm{\theta})$,$\pi(\bm{\theta})\}$ with the entire parameter space $\bm{\Theta}$ decomposed as (\ref{eq:decomposition_of_entire_ps}). For each $i$ ($i=1,\cdots,K$), define the posterior mutual information of $(\theta_{i},\theta_{-i}) \in \Theta_{i} \times \Theta_{-i}$, posterior differential entropy of $\theta_{-i} \in \Theta_{-i}$, and posterior conditional differential entropy of $\theta_{-i} \in \Theta_{-i}$ given $\theta_{i} \in \Theta_{i}$ as follows:
\begin{align}
\label{eq:mutual_information}
I(\theta_{i} ; \theta_{-i} )&\defeq \KL(\pi(\bm{\theta}|\textbf{y})
||
\pi(\theta_{i}|\textbf{y}) \pi(\theta_{-i}|\textbf{y}))
=
\int_{\bm{\Theta}} 
\log\ \bigg(
\frac{\pi(\bm{\theta}|\textbf{y})}{\pi(\theta_{i}|\textbf{y}) \pi(\theta_{-i}|\textbf{y})}
\bigg)
\pi(\bm{\theta}|\textbf{y})
d\bm{\theta},
\\
\label{eq:entropy}
H(\theta_{-i})&\defeq -\mathbb{E}_{\pi(\theta_{-i}|\textbf{y})}[\log \pi(\theta_{-i}|\textbf{y})]
=
-
\int_{\Theta_{-i}}
\log \pi(\theta_{-i}|\textbf{y})
\pi(\theta_{-i}|\textbf{y}) d\theta_{-i},\\
\label{eq:conditional entropy}
H(\theta_{-i}|\theta_{i})&\defeq 
-\mathbb{E}_{\pi(\bm{\theta}|\textbf{y})}[\log \pi(\theta_{-i}|\theta_{i},\textbf{y})]
=
-\int_{\bm{\Theta}}
\log \pi(\theta_{-i}|\theta_{i}, \textbf{y})
\pi(\bm{\theta}|\textbf{y})d\bm{\theta}.
\end{align}
Then the following $K$ information equations hold
\begin{align}
\label{eq:Gibbs_information_equality}
I(\theta_{i};\theta_{-i}) &= H(\theta_{-i}) - H(\theta_{-i}|\theta_{i}), \quad (i=1,\cdots,K).
\end{align}
\end{corollary}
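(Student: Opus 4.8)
The plan is to verify the identity by direct expansion, treating it as the measure-theoretic analogue of the familiar ``mutual information equals entropy minus conditional entropy'' decomposition. I would fix an index $i$ and begin from the definitional integral for $I(\theta_{i};\theta_{-i})$ in (\ref{eq:mutual_information}). The key algebraic step is to rewrite the integrand using the chain-rule factorization $\pi(\bm{\theta}|\textbf{y}) = \pi(\theta_{-i}|\theta_{i},\textbf{y})\,\pi(\theta_{i}|\textbf{y})$, which is exactly the decomposition recorded in item \textbf{(iii)} of Section \ref{sec:Essential sets in Bayesian reasoning}. Substituting this into the numerator of the log-ratio cancels the $\pi(\theta_{i}|\textbf{y})$ factor against the denominator, so that $\pi(\bm{\theta}|\textbf{y})/[\pi(\theta_{i}|\textbf{y})\pi(\theta_{-i}|\textbf{y})] = \pi(\theta_{-i}|\theta_{i},\textbf{y})/\pi(\theta_{-i}|\textbf{y})$.

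Next I would split the resulting logarithm of a quotient into a difference of two logarithms and, by linearity of the integral, write $I(\theta_{i};\theta_{-i})$ as the difference of two expectations under $\pi(\bm{\theta}|\textbf{y})$. The first of these is $\mathbb{E}_{\pi(\bm{\theta}|\textbf{y})}[\log\pi(\theta_{-i}|\theta_{i},\textbf{y})]$, which is precisely $-H(\theta_{-i}|\theta_{i})$ by (\ref{eq:conditional entropy}). For the second expectation I would collapse it to an integral over $\Theta_{-i}$ only: since $\log\pi(\theta_{-i}|\textbf{y})$ does not depend on $\theta_{i}$, writing $d\bm{\theta}=d\theta_{i}\,d\theta_{-i}$ and integrating $\theta_{i}$ out against $\pi(\bm{\theta}|\textbf{y})$ leaves the marginal $\pi(\theta_{-i}|\textbf{y})$, so that $\mathbb{E}_{\pi(\bm{\theta}|\textbf{y})}[\log\pi(\theta_{-i}|\textbf{y})] = \mathbb{E}_{\pi(\theta_{-i}|\textbf{y})}[\log\pi(\theta_{-i}|\textbf{y})] = -H(\theta_{-i})$ by (\ref{eq:entropy}). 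Combining the two identifications yields $I(\theta_{i};\theta_{-i}) = -H(\theta_{-i}|\theta_{i}) + H(\theta_{-i})$, which is (\ref{eq:Gibbs_information_equality}); since $i$ was arbitrary, all $K$ equations follow.

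This argument is essentially computational, so I do not anticipate a genuine obstacle; the only points requiring care are bookkeeping ones. The marginalization step implicitly invokes Fubini/Tonelli to swap the order of integration, which is justified provided the integrands are integrable---this is the content of the standing assumption that the entropies and divergence in (\ref{eq:mutual_information})--(\ref{eq:conditional entropy}) are well-defined and finite. I would also note that the identity is the cross-sectional counterpart of Corollary \ref{cor:Gibbs_sampler_revisited}: the constant $\log\pi(\theta_{-i}|\textbf{y})$ appearing there as the maximal value of the functional $\mathcal{F}_{i}$ is, up to sign and after taking posterior expectations, the entropy term $H(\theta_{-i})$ here, so the two results can be cross-checked against one another.
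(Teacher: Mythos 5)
Your proof is correct, but it takes a genuinely different route from the paper's. The paper proves this corollary as a consequence of Corollary \ref{cor:Gibbs_sampler_revisited}: parts \textbf{(b)} and \textbf{(c)} give, for each fixed $\theta_{-i}$, the identity $\KL(\pi(\theta_{i}|\theta_{-i},\textbf{y})\,\|\,\pi(\theta_{i}|\textbf{y})) = -\log\pi(\theta_{-i}|\textbf{y}) + \eE_{\pi(\theta_{i}|\theta_{-i},\textbf{y})}[\log\pi(\theta_{-i}|\theta_{i},\textbf{y})]$ (i.e., the statement that $\mathcal{F}_{i}$ attains its maximum value $\log\pi(\theta_{-i}|\textbf{y})$ exactly at the full conditional), and the paper then integrates both sides against $\pi(\theta_{-i}|\textbf{y})$ and identifies the three resulting terms as $I(\theta_{i};\theta_{-i})$, $H(\theta_{-i})$, and $H(\theta_{-i}|\theta_{i})$. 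You instead expand the definition (\ref{eq:mutual_information}) directly, substitute the factorization $\pi(\bm{\theta}|\textbf{y})=\pi(\theta_{-i}|\theta_{i},\textbf{y})\,\pi(\theta_{i}|\textbf{y})$, split the logarithm, and marginalize out $\theta_{i}$---the classical Cover--Thomas computation, self-contained and invoking neither Corollary \ref{cor:Gibbs_sampler_revisited} nor the duality formula. Both arguments rest on the same underlying algebra (Bayes' theorem plus Fubini), so neither is more rigorous; what differs is what they showcase. The paper's route serves its thematic goal of deriving everything from the duality formula, presenting the information equality as the integrated form of the variational characterization of the full conditional, whereas your route is shorter and more elementary but makes the corollary look like a free-standing information-theory fact detached from the paper's machinery. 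Notably, your closing remark---that the maximal value $\log\pi(\theta_{-i}|\textbf{y})$ of $\mathcal{F}_{i}$ becomes $H(\theta_{-i})$ after taking posterior expectations---is precisely the paper's proof; what you used as a cross-check, the paper uses as the engine.
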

\begin{proof}
By Corollary \ref{cor:Gibbs_sampler_revisited} \textbf{(b)} and \textbf{(c)}, for each $i$ ($i=1,\cdots, K$) the following equality holds
\begin{align*}
\KL(\pi(\theta_{i}|\theta_{-i},\textbf{y}) \|\pi(\theta_{i}|\textbf{y})) =
-
\log \pi(\theta_{-i}|\textbf{y})
+\eE_{\pi(\theta_{i}|\theta_{-i},\textbf{y})}[\log \pi(\theta_{-i}|\theta_{i},\textbf{y})].
\end{align*}
Take the integral $\mathbb{E}_{\pi(\theta_{-i}|\textbf{y})}[\cdot] =\int_{\Theta_{-i}} \cdot \, \pi(\theta_{-i}|\textbf{y})d\theta_{-i} $ on both sides of the above equality to have
\begin{align*}
&\mathbb{E}_{\pi(\theta_{-i}|\textbf{y})}[
\KL(\pi(\theta_{i}|\theta_{-i},\textbf{y}) \|\pi(\theta_{i}|\textbf{y}))]
\\
&\,\,=
-
\mathbb{E}_{\pi(\theta_{-i}|\textbf{y})}[
\log \pi(\theta_{-i}|\textbf{y})]
-
(
-
\mathbb{E}_{\pi(\theta_{-i}|\textbf{y})}[
\eE_{\pi(\theta_{i}|\theta_{-i},\textbf{y})}[\log \pi(\theta_{-i}|\theta_{i},\textbf{y})]
]
),
\end{align*}
which is the same as $I(\theta_{i} ; \theta_{-i})
=
H(\theta_{-i})
-
H(\theta_{-i}|\theta_{i})
$. 
\end{proof}
Figure \ref{fig:Gibbs_Channel} displays a schematic description of Corollary \ref{corollary:Information_theory_Gibbs_sampler}. For each $i$ ($i=1,\cdots,K$), we define $I(\theta_{-i} ; \theta_{i} )$, $H(\theta_{i})$, and $H(\theta_{i}|\theta_{-i})$ by interchanging the $i$ and $-i$ from the $I(\theta_{i} ; \theta_{-i} )$ (\ref{eq:mutual_information}), $H(\theta_{-i})$ (\ref{eq:entropy}), and $H(\theta_{-i}|\theta_{i})$ (\ref{eq:conditional entropy}), respectively.
Then we can show that it holds $ I(\theta_{i} ; \theta_{-i} )=H(\theta_{-i}) - H(\theta_{-i}|\theta_{i}) =  H(\theta_{i}) - H(\theta_{i}|\theta_{-i}) =I(\theta_{-i} ; \theta_{i})$ (Theorem 2.4.1 of \citep{cover1999elements}). Mutual information $I(\theta_{i} ; \theta_{-i} )$ quantifies the reduction in uncertainty of random quantity $\theta_{i} \in \Theta_{i}$ once we know the other $\theta_{-i}\in \Theta_{-i}$, \emph{a posteriori}. It vanishes if and only if $\theta_{i}$ and $\theta_{-i}$ are marginally independent, \emph{a posteriori}. Within a cycle of the Gibbs sampler (Algorithm \ref{alg:Gibbs sampler}), this mutual information is transmitted through an iterative Monte Carlo scheme \citep{gelfand1990sampling}. 
\begin{figure}[h]
\centering
\includegraphics[width=0.8\textwidth]{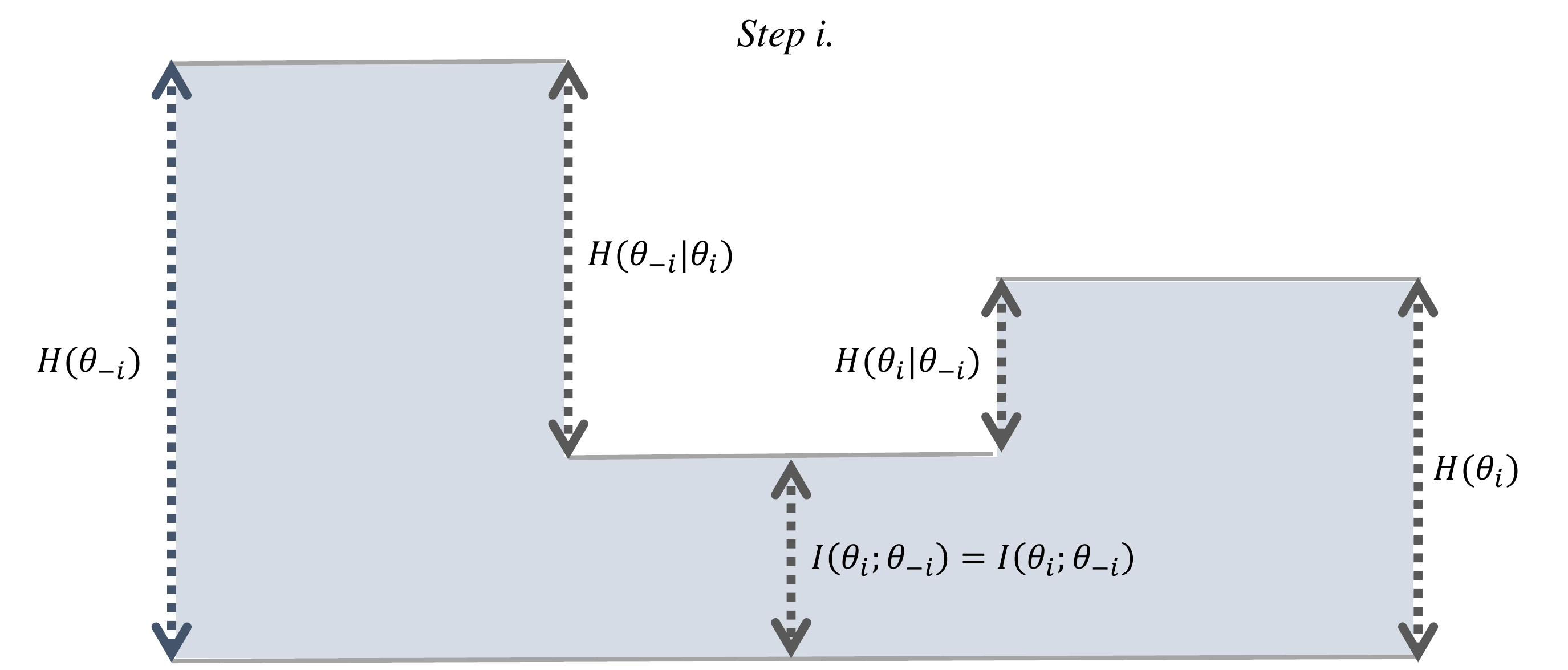}
\caption{Schematic description of the information equality associated with the Gibbs sampler (Algorithm \ref{alg:Gibbs sampler}). At the $i$-th step ($i=1,\cdots,K$) within a cycle, it holds $I(\theta_{i} ; \theta_{-i} )=H(\theta_{-i}) - H(\theta_{-i}|\theta_{i}) =  H(\theta_{i}) - H(\theta_{i}|\theta_{-i}) =I(\theta_{-i} ; \theta_{i})$. The quantity $I(\theta_{i} ; \theta_{-i} )$ can be interpreted as the amount of information that shall be transmitted from the $i$-step to the $(i+1)$-th step within a cycle via drawing a sample $\theta_{i} \sim \pi(\theta_{i}|\theta_{-i},\textbf{y})$.}
\label{fig:Gibbs_Channel}
\end{figure}
\section{CAVI algorithm revisited by the duality formula}\label{sec:CAVI algorithm revisited by the duality formula}
Consider a Bayesian model $\{p(\textbf{y}|\bm{\theta})$,$\pi(\bm{\theta})\}$ with a mean-field variational family $\mathcal{Q}_{\bm{\theta}}^{MF}$ (\ref{eq:mean-field variational family}). Having employed the CAVI algorithm (Algorithm \ref{alg:CAVI algorithm}), for each $i$ ($i=1,\cdots,K$), we can obtain the analytic formula of the $i$-th variational factor $q^{*}(\theta_{i})$ given as (\ref{eq:variational_factor_theta_i}). We can regard this variational factor $q^{*}(\theta_{i})$ as a surrogate for the marginal posterior density $\pi(\theta_{i}|\textbf{y})$. Note that the two densities belong to the same set $\mathcal{Q}_{\theta_{i}|\textbf{y}}^{m}$ (defined in the item $\textbf{(iv)}$); refer to the Venn diagram in Figure \ref{fig:set_inclusion}. This suggests that an `intrinsic' approximation quality due to the CAVI algorithm can be explained by the Kullback-Leibler divergence $\text{KL}(q^{*}(\theta_{i})||\pi(\theta_{i}|\textbf{y}))$ or its lower bound: a lower value may indicate a better approximation quality, which can be further applied to diagnostic of the algorithm \citep{yao2018yes}.

In practice, although it is possible to sample from marginal posterior density $\pi(\theta_{i}|\textbf{y})$ ($i=1,\cdots,K$) through various MCMC techniques \citep{gelman2013bayesian}, but it is difficult to obtain an analytic expression of the density $\pi(\theta_{i}|\textbf{y})$, hence, so is for the divergence $\text{KL}(q^{*}(\theta_{i})||\pi(\theta_{i}|\textbf{y}))$. It is also nontrivial to acquire a lower bound for $\text{KL}(q^{*}(\theta_{i})||\pi(\theta_{i}|\textbf{y}))$ through information inequalities (for example, Pinsker's inequality \citep{massart2007concentration}) as such inequalities again require a closed-form expression for the density $\pi(\theta_{i}|\textbf{y})$ for each $i$, ($i=1,\cdots,K$).

The duality formula (\ref{eq:duality_formula_continuous_version}) provides some heuristic insight about how the two densities, $q^{*}(\theta_{i})$ and $\pi(\theta_{i}|\textbf{y})$, are related, and an algorithmic-based lower bound for the $\text{KL}(q^{*}(\theta_{i})||$
$\pi(\theta_{i}|\textbf{y}))$ \emph{without requiring} analytical expression of $\pi(\theta_{i}|\textbf{y})$ when the CAVI algorithm (Algorithm \ref{alg:CAVI algorithm}) is employed:
\begin{corollary}\label{cor:CAVI_algorithm_revisited} 
Consider a Bayesian model $\{p(\textbf{y}|\bm{\theta})$,$\pi(\bm{\theta})\}$ with the entire parameter space $\bm{\Theta}$ decomposed as (\ref{eq:decomposition_of_entire_ps}). Provided mean-field variational family $\mathcal{Q}_{\bm{\theta}}^{MF}$ (\ref{eq:mean-field variational family}), assume that the CAVI algorithm (Algorithm \ref{alg:CAVI algorithm}) is employed to approximate the target density $\pi(\bm{\theta}|\textbf{y})$ (\ref{eq:posterior_distribution}). For each $i$ ($i=1,\cdots,K$), define a functional $\mathcal{R}_{-i}:\mathcal{Q}_{\theta_{-i}}^{MF} \rightarrow (0, \infty)$ induced by the duality formula as follow:
\begin{align}
\label{eq:squashing_functional}
\mathcal{R}_{-i}\{q(\theta_{-i})\} \defeq  \frac{\int \exp \eE_{q(\theta_{-i})}[\log \pi(\theta_{i}|\theta_{-i},\textbf{y})] d\theta_{i} }{\exp \text{KL}(q(\theta_{-i})|| \pi(\theta_{-i}|\textbf{y}))}.
\end{align}

\noindent Then the followings relations hold.
\begin{itemize}
\item[]\textbf{(a)} There exists $\mathcal{R}_{-i}\{ q^{*}(\theta_{-i}) \}  \in (0,1]$ which is constant with respect to $q^{*}(\theta_{i})$ such that
\begin{align}
\label{eq:squashing_inequality}
\mathcal{R}_{-i}\{ q^{*}(\theta_{-i}) \} \cdot q^{*}(\theta_{i}) \leq \pi(\theta_{i}|\textbf{y})\quad \text{for all } \theta_{i} \in \Theta_{i}.
\end{align}
The value $\mathcal{R}_{-i}\{ q^{*}(\theta_{-i}) \}$ is called the squashing constant for the variational factor $q^{*}(\theta_{i})$.
\item[]\textbf{(b)} Kullback-Leibler divergence between $q^{*}(\theta_{i})$ and $\pi(\theta_{i}|\textbf{y})$ is lower bounded by
\begin{align}
\label{eq:KL_lower_bound}
\text{KL}(q^{*}(\theta_{i})||\pi(\theta_{i}|\textbf{y})) 
\geq \text{max}\bigg\{0,
\log \bigg(
\int \exp\ \mathbb{E}_{q^{*}(\theta_{i})}[  \log\ \pi(\theta_{-i}|\theta_{i},\textbf{y}) ] d(\theta_{-i})
\bigg)
\bigg\}.
\end{align}
\end{itemize}
\end{corollary}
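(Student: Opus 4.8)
The plan is to apply the continuous duality formula (\ref{eq:duality_formula_continuous_version}) twice --- once on the complementary space $\Theta_{-i}$ for part \textbf{(a)} and once on the component space $\Theta_{i}$ for part \textbf{(b)} --- in direct analogy with the proof of Corollary \ref{cor:Gibbs_sampler_revisited}. The preliminary ingredients I would record are the two marginalization identities $\pi(\theta_{i}|\textbf{y}) = \eE_{\pi(\theta_{-i}|\textbf{y})}[\pi(\theta_{i}|\theta_{-i},\textbf{y})]$ and $\pi(\theta_{-i}|\textbf{y}) = \eE_{\pi(\theta_{i}|\textbf{y})}[\pi(\theta_{-i}|\theta_{i},\textbf{y})]$, together with the closed form $q^{*}(\theta_{i}) = \nu(\theta_{i})/\int \nu(\theta_{i})\,d\theta_{i}$ from (\ref{eq:variational_factor_theta_i}), where $\nu(\theta_{i}) = \exp \eE_{q^{*}(\theta_{-i})}[\log \pi(\theta_{i}|\theta_{-i},\textbf{y})]$. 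The key observation tying these together is that the numerator of $\mathcal{R}_{-i}\{q^{*}(\theta_{-i})\}$ in (\ref{eq:squashing_functional}) is precisely the normalizer $\int \nu(\theta_{i})\,d\theta_{i}$, which is a scalar once $\theta_{i}$ is integrated out.

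For part \textbf{(a)}, I would fix $\theta_{i} \in \Theta_{i}$ and invoke (\ref{eq:duality_formula_continuous_version}) on $\Theta_{-i}$ with $p = \pi(\theta_{-i}|\textbf{y})$ and $h(\theta_{-i}) = \log \pi(\theta_{i}|\theta_{-i},\textbf{y})$; by the first marginalization identity its left-hand side equals $\log \pi(\theta_{i}|\textbf{y})$. Evaluating the supremum at the single feasible point $q^{*}(\theta_{-i})$ gives the one-sided bound $\log \pi(\theta_{i}|\textbf{y}) \geq \eE_{q^{*}(\theta_{-i})}[\log \pi(\theta_{i}|\theta_{-i},\textbf{y})] - \text{KL}(q^{*}(\theta_{-i})\|\pi(\theta_{-i}|\textbf{y}))$. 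Exponentiating and recognizing $\nu(\theta_{i}) = \big(\int \nu\,d\theta_{i}\big)\,q^{*}(\theta_{i})$ rearranges this exactly into $\mathcal{R}_{-i}\{q^{*}(\theta_{-i})\}\cdot q^{*}(\theta_{i}) \leq \pi(\theta_{i}|\textbf{y})$. Positivity of the squashing constant is clear since both numerator and denominator of (\ref{eq:squashing_functional}) are strictly positive; integrating the pointwise inequality over $\Theta_{i}$ and using that $q^{*}(\theta_{i})$ and $\pi(\theta_{i}|\textbf{y})$ each integrate to one yields $\mathcal{R}_{-i}\{q^{*}(\theta_{-i})\} \leq 1$, hence $\mathcal{R}_{-i}\{q^{*}(\theta_{-i})\} \in (0,1]$. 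That it is constant with respect to $q^{*}(\theta_{i})$ is immediate from (\ref{eq:squashing_functional}), which references only $q^{*}(\theta_{-i})$.

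For part \textbf{(b)}, the $\geq 0$ branch is simply nonnegativity of the Kullback--Leibler divergence. For the nontrivial branch I would now fix $\theta_{-i} \in \Theta_{-i}$ and apply (\ref{eq:duality_formula_continuous_version}) on $\Theta_{i}$ with $p = \pi(\theta_{i}|\textbf{y})$ and $h(\theta_{i}) = \log \pi(\theta_{-i}|\theta_{i},\textbf{y})$; by the second marginalization identity the left-hand side becomes $\log \pi(\theta_{-i}|\textbf{y})$. Evaluating at $q^{*}(\theta_{i})$ gives $\log \pi(\theta_{-i}|\textbf{y}) \geq \eE_{q^{*}(\theta_{i})}[\log \pi(\theta_{-i}|\theta_{i},\textbf{y})] - \text{KL}(q^{*}(\theta_{i})\|\pi(\theta_{i}|\textbf{y}))$, which rearranges to $\exp \text{KL}(q^{*}(\theta_{i})\|\pi(\theta_{i}|\textbf{y})) \cdot \pi(\theta_{-i}|\textbf{y}) \geq \exp \eE_{q^{*}(\theta_{i})}[\log \pi(\theta_{-i}|\theta_{i},\textbf{y})]$. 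Integrating over $\theta_{-i}$, using $\int \pi(\theta_{-i}|\textbf{y})\,d\theta_{-i} = 1$, and taking logarithms delivers exactly the claimed bound $\log\big(\int \exp \eE_{q^{*}(\theta_{i})}[\log \pi(\theta_{-i}|\theta_{i},\textbf{y})]\,d\theta_{-i}\big)$.

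The main obstacle here is bookkeeping rather than depth: in each application one must correctly identify which coordinate is the integration variable of the duality formula and which is held fixed, since the roles of $\theta_{i}$ and $\theta_{-i}$ swap between parts \textbf{(a)} and \textbf{(b)}. I would also confirm the absolute-continuity hypotheses $q^{*}(\theta_{-i}) \ll \pi(\theta_{-i}|\textbf{y})$ and $q^{*}(\theta_{i}) \ll \pi(\theta_{i}|\textbf{y})$ that legitimize evaluating the duality supremum at these points, and check that the exponentiate-then-integrate steps are justified by finiteness of the relevant normalizers; these are routine under the standing assumptions.
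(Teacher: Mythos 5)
Your proposal is correct and takes essentially the same route as the paper: both parts apply the duality formula (\ref{eq:duality_formula_continuous_version}) exactly as you describe (with $p=\pi(\theta_{-i}|\textbf{y})$, $h=\log\pi(\theta_{i}|\theta_{-i},\textbf{y})$ for \textbf{(a)} and the roles swapped for \textbf{(b)}), lower-bound the supremum by its value at the CAVI output, identify the numerator with the normalizer of $q^{*}(\theta_{i})$, and then integrate and take logarithms. The only cosmetic differences are that the paper inserts an intermediate step restricting the supremum to the mean-field family before evaluating at $q^{*}(\theta_{-i})$, and in \textbf{(b)} it routes the bound through the swapped functional $\mathcal{R}_{i}\{q^{*}(\theta_{i})\}\leq 1$ rather than integrating the unnormalized inequality directly, which is the same computation.
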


\begin{proof}
\textbf{\emph{(a)}} To start with, for each $i$ ($i=1,\cdots,K$), define a functional $\mathcal{F}_{-i}: \mathcal{Q}_{\theta_{-i}} \rightarrow \mathbb{R}$ that complements the functional $\mathcal{F}_{i}$ (\ref{eq:functional_theta_i}):
\begin{align}
\label{eq:functional_-theta_i}
&\mathcal{F}_{-i}\{q(\theta_{-i})\}
=
\mathbb{E}_{q(\theta_{-i})}[\log \pi(\theta_{i}|- \theta_{i}, \textbf{y}) ] - \text{KL}(q(\theta_{-i})||\pi(\theta_{-i}|\textbf{y}) ).
\end{align}
For each $i=1, \cdots, K$, use the duality formula (\ref{eq:duality_formula_continuous_version}) by replacing the $q(\theta)$, $p(\theta)$, and $h(\theta)$ in the formula with $q(\theta_{-i}) \in \mathcal{Q}_{\theta_{-i}}$, $\pi(\theta_{-i}|\textbf{y})\in \mathcal{Q}_{\theta_{-i}}$, and $\log \pi(\theta_{i}|\theta_{-i},\textbf{y})\in \mathcal{Q}_{\theta_{i}}$, respectively, which leads to
\begin{align}
\label{eq:CAVI_pf_1}
\log \pi(\theta_{i}|\textbf{y}) &= \text{sup}_{q(\theta_{-i}) \ll \pi(\theta_{-i}|\textbf{y})} \mathcal{F}_{-i}\{q(\theta_{-i}) \}.
\end{align}
Now, take the $\exp(\cdot)$ to the both sides of (\ref{eq:CAVI_pf_1}), and then change the $\exp(\cdot)$ and $\text{sup}(\cdot)$ to obtain
\begin{align}
\nonumber
\pi(\theta_{i}|\textbf{y}) &= \exp\ 
[
\text{sup}_{q(\theta_{-i}) \ll \pi(\theta_{-i}|\textbf{y})} \mathcal{F}_{-i}\{q(\theta_{-i}) \}
]\\
\nonumber
&=
\text{sup}_{q(\theta_{-i}) \ll \pi(\theta_{-i}|\textbf{y})} 
[
\exp\ 
\mathcal{F}_{-i}\{q(\theta_{-i}) \}
]
\\
\nonumber
&=\text{sup}_{q(\theta_{-i}) \ll \pi(\theta_{-i}|\textbf{y})} 
\bigg[
\frac{\exp \mathbb{E}_{q(\theta_{-i})}[\log \pi(\theta_{i}|\theta_{-i}, \textbf{y}) ]}{\exp \text{KL}(q(\theta_{-i})||\pi(\theta_{-i}|\textbf{y}))}
\bigg]
\\
\label{eq:CAVI_pf_2}
&\geq 
\text{sup}_{
q(\theta_{-i}) \ll \pi(\theta_{-i}|\textbf{y}),q(\theta_{-i}) \in \mathcal{Q}_{\theta_{-i}}^{MF}}
\bigg[
\frac{\exp \mathbb{E}_{q(\theta_{-i})}[\log \pi(\theta_{i}|\theta_{-i}, \textbf{y}) ]}{\exp \text{KL}(q(\theta_{-i})||\pi(\theta_{-i}|\textbf{y}))}
\bigg].
\end{align}
The inequality (\ref{eq:CAVI_pf_2}) holds because of a general property of supremum (i.e., it holds $\sup_{A}(\cdot) \geq \sup_{B} (\cdot)$ if $ B \subset A$). On the other hand, the CAVI-optimized variational density for $\theta_{-i}$, denoted as $q^{*}(\theta_{-i})$, can be represented by
\begin{align}
\label{eq:CAVI_pf_5}
q^{*}(\theta_{-i})=
\prod_{j=1, j\neq i }^{K} q^{*}(\theta_{j})
=
q^{*}(\theta_{1})\cdots q^{*}(\theta_{i-1})\cdot q^{*}(\theta_{i+1}) \cdots q^{*}(\theta_{K}) \in \mathcal{Q}_{\theta_{-i}|\textbf{y}}^{MF},
\end{align}
where each variational factor on the right-hand side has been optimized through the CAVI optimization formula (\ref{eq:variational_factor_theta_i}). Clearly, the density $q^{*}(\theta_{-i})$ (\ref{eq:CAVI_pf_5}) belongs to the set 
\begin{align*}
&B \defeq \{q: \Theta_{-i} \rightarrow [0,\infty) \,|\, q \text{ is a density supported on } \Theta_{-i}
,\,q(\theta_{-i}) \ll \pi(\theta_{-i}|\textbf{y}),\,q(\theta_{i}) \in \mathcal{Q}_{\theta_{-i}}^{MF} \}
\end{align*}
which is the set considered in the $\text{sup}(\cdot)$ (\ref{eq:CAVI_pf_2}). 

Now, use the definition of supremum and a simple calculation $a \times (1/a) =1$ to derive the following inequality
\begin{align}
\nonumber
\pi(\theta_{i}|\textbf{y}) &\geq 
\frac{\exp \mathbb{E}_{q^{*}(\theta_{-i})}[\log \pi(\theta_{i}|\theta_{-i}, \textbf{y}) ]}{\exp \text{KL}(q^{*}(\theta_{-i})||\pi(\theta_{-i}|\textbf{y}))}
\\
\nonumber
&=
\frac{\int \exp \mathbb{E}_{q^{*}(\theta_{-i})}[\log \pi(\theta_{i}|\theta_{-i}, \textbf{y})] d \theta_{i}}{\exp \text{KL}(q^{*}(\theta_{-i})||\pi(\theta_{-i}|\textbf{y}))}
\times 
\frac{\exp \mathbb{E}_{q^{*}(\theta_{-i})}[\log \pi(\theta_{i}|\theta_{-i}, \textbf{y}) ]}{\int \exp \mathbb{E}_{q^{*}(\theta_{-i})}[\log \pi(\theta_{i}|\theta_{-i}, \textbf{y})] d \theta_{i}}
\\
\label{eq:CAVI_pf_3}
&=\mathcal{R}_{-i}\{q^{*}(\theta_{-i}) \}
\times 
q^{*}(\theta_{i})  \quad\text{on } \Theta_{i},
\end{align}
where the functional $\mathcal{R}_{-i}\{\cdot \}: \mathcal{Q}_{\theta_{-i}}^{MF} \rightarrow (0,\infty)$ is defined by (\ref{eq:squashing_functional}) and $
q^{*}(\theta_{i}) \in \mathcal{Q}_{\theta_{i}|\textbf{y}}^{m}$ (\ref{eq:CAVI_pf_5}).

Finally, because $\pi(\theta_{i}|\textbf{y})$ and $q^{*}(\theta_{i})$ are densities, by taking $\int \cdot d\theta_{i}$ on the both sides of (\ref{eq:CAVI_pf_3}), we can further obtain $0 < \mathcal{R}_{-i}\{q^{*}(\theta_{-i})\} \leq 1$. Note that the value $\mathcal{R}_{-i}\{q^{*}(\theta_{-i})\}$ is constant with respect to the $i$-variational factor $q^{*}(\theta_{i})$.
\\
\\
\noindent \textbf{\emph{(b)}} For each $i$ ($i=1,\cdots,K$), by using the same reasoning used in proving \textbf{\emph{(a)}}, we can derive the following inequality 
\begin{align}
\label{eq:CAVI_pf_4}
\pi(\theta_{-i}|\textbf{y}) &\geq 
\frac{\exp \mathbb{E}_{q^{*}(\theta_{i})}[\log \pi(\theta_{-i}|\theta_{i}, \textbf{y}) ]}{\exp \text{KL}(q^{*}(\theta_{i})||\pi(\theta_{i}|\textbf{y}))}
=\mathcal{R}_{i}\{q^{*}(\theta_{i}) \}
\cdot
q^{*}(\theta_{-i})  \quad\text{on } \Theta_{-i},
\end{align}
where the functional $\mathcal{R}_{i}\{\cdot\}:\mathcal{Q}_{\theta_{i}}^{m} \rightarrow (0,\infty)$ is defined by
\begin{align*}
\mathcal{R}_{i}\{q(\theta_{i}) \}
&=
\frac{\int \exp \mathbb{E}_{q(\theta_{i})}[\log \pi(\theta_{-i}|\theta_{i}, \textbf{y})] d (\theta_{-i})}{\exp \text{KL}(q(\theta_{i})||\pi(\theta_{i}|\textbf{y}))},
\end{align*}
and $q^{*}(\theta_{-i})$ on the right-hand side of the (\ref{eq:CAVI_pf_4}) is obtained by interchanging the $i$ with $-i$ from the formula (\ref{eq:variational_factor_theta_i}).

Because $\pi(\theta_{-i}|\textbf{y})$ and $q^{*}(\theta_{-i})$ are densities, by taking $\int \cdot d(\theta_{-i})$ on the both sides of the inequality (\ref{eq:CAVI_pf_4}), we have $0 < \mathcal{R}_{i}\{q^{*}(\theta_{i})\} \leq 1$. Conclude the proof by using the fact that the Kullback-Leibler divergence $\text{KL}(q^{*}(\theta_{i})||\pi(\theta_{i}|\textbf{y}))$ is non-negative.
\end{proof}

\begin{figure}[h]
\centering
\includegraphics[width=1\textwidth]{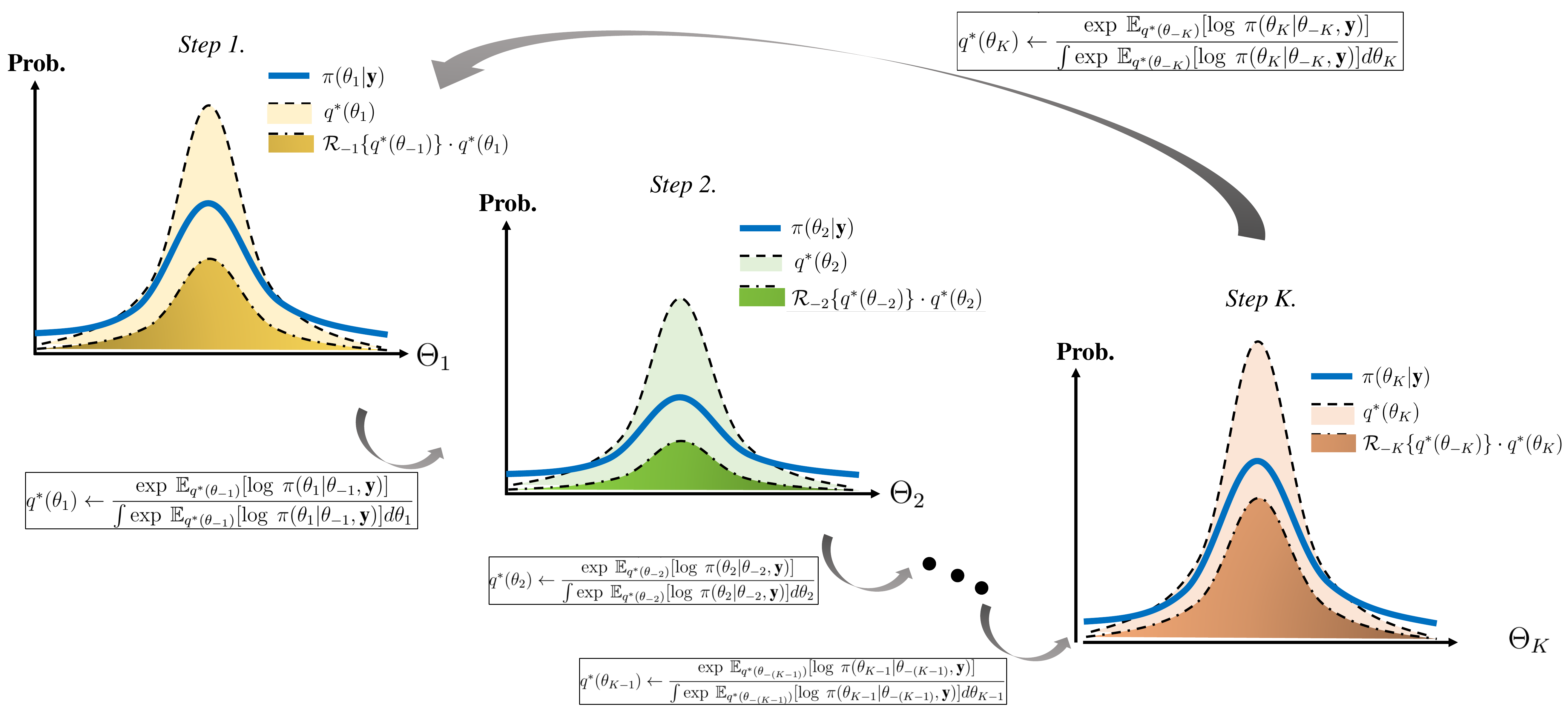}
\caption{Pictorial illustration of CAVI algorithm (Algorithm \ref{alg:CAVI algorithm}). At the $i$-th step within a cycle ($i=1,\cdots,K$), the panel shows that the $i$-th variational factor $q^{*}(\theta_{i})$ (\ref{eq:variational_factor_theta_i}) is pressed from above by the squashing constant $\mathcal{R}_{-i}\{ q^{*}(\theta_{-i})\} \in (0,1]$ so that it holds $\mathcal{R}_{-i}\{ q^{*}(\theta_{-i}) \} \cdot q^{*}(\theta_{i}) \leq \pi(\theta_{i}|\textbf{y})$ on $\Theta_{i}$. The distributional gap between $q^{*}(\theta_{i})$ and $\pi(\theta_{i}|\textbf{y})$ is explained by the inequality (\ref{eq:KL_lower_bound}).}
\label{fig:Cor_illustration}
\end{figure}
See Figure \ref{fig:Cor_illustration} for a pictorial illustration of the Corollary \ref{cor:CAVI_algorithm_revisited}. Corollary \ref{cor:CAVI_algorithm_revisited} \textbf{(a)} implies that, at the $i$-th step of the CAVI algorithm (Algorithm \ref{alg:CAVI algorithm}) ($i=1,\cdots,K$), the $i$-th variational factor $q^{*}(\theta_{i}) \in \mathcal{Q}_{\theta_{i}|\textbf{y}}^{m}$ (\ref{eq:variational_factor_theta_i}) and the $i$-th marginal target density $\pi(\theta_{i}|\textbf{y}) \in \mathcal{Q}_{\theta_{i}|\textbf{y}}^{m} $ are related by the inequality (\ref{eq:squashing_inequality}) with the squashing constant $\mathcal{R}_{-i}\{ q^{*}(\theta_{-i}) \}$, the functional value of $\mathcal{R}_{-i}\{\cdot\}$ evaluated at $q^{*}(\theta_{-i}) = \prod_{j = 1, j \neq i}^{K}q^{*}(\theta_{j}) \in \mathcal{Q}_{\theta_{-i}|\textbf{y}}^{MF}$. More colloquially, the variational factor $q^{*}(\theta_{i})$ (\ref{eq:variational_factor_theta_i}) can be pressed from above by the squashing constant $\mathcal{R}_{-i}\{ q^{*}(\theta_{-i})\}$, and kept below than $\pi(\theta_{i}|\textbf{y})$. Corollary \ref{cor:CAVI_algorithm_revisited} \textbf{(b)} suggests that the denominator in the CAVI formula (\ref{eq:variational_factor_theta_i}) plays an important role by participating as a lower bound of the distance $\text{KL}(q^{*}(\theta_{i})||\pi(\theta_{i}|\textbf{y}))$. Note that the lower bound is algorithm-based, which can be approximated via a Monte Carlo algorithm. 
\section{Summary}\label{sec:Summary}
In this paper, we aimed to provide some pedagogical insights about the Gibbs sampler (Algorithm \ref{alg:Gibbs sampler}) and CAVI algorithm (Algorithm \ref{alg:CAVI algorithm}) by treating fundamental densities used in the schemes as elements of sets of densities. Proofs of theorems contained in the paper are derived from the duality formula (\ref{eq:duality_formula_continuous_version}). The derived theorems helped comprehend some common structures between the Gibbs sampler and CAVI algorithm in a set-theoretical perspective. Among salient findings, one of the key discoveries was that the full conditional posterior distribution can be viewed as the global maximum of a functional induced by the duality formula. This has been extended to a new view on the Gibbs sampler from the perspective of information theory. Additionally, we showed that there is a link between the approximation quality of the CAVI algorithm and the denominator of the variational factor (\ref{eq:variational_factor_theta_i}). 

\bibliographystyle{chicago}
\bibliography{MFVB_refs}
\end{document}